\theoremstyle{definition}
\newtheorem{definition}{Definition}
\newtheorem{notation}[definition]{Notation}
\newtheorem{example}[definition]{Example}
\newtheorem{remark}[definition]{Remark}
\newtheorem{openproblem}{Open Problem}
\theoremstyle{plain}
\newtheorem{lemma}[definition]{Lemma}
\newtheorem{proposition}[definition]{Proposition}
\newtheorem{theorem}[definition]{Theorem}
\newcommand{\sltwo}{\mathfrak{sl}_2(\mathbb{C})}
\newcommand{\slthree}{\mathfrak{sl}_3(\mathbb{C})}
\begin{document}

\title{Lie invariants in two and three variables}

\author{Murray R. Bremner}

\address{Department of Mathematics and Statistics,
University of Saskatchewan, Canada}

\email{bremner@math.usask.ca}

\author{Jiaxiong Hu}

\address{Department of Mathematics and Statistics,
University of Saskatchewan, Canada}

\email{jih568@mail.usask.ca}

\date{\today}

\noindent

\begin{abstract}
We use computer algebra to determine the Lie invariants of degree $\le 12$ in
the free Lie algebra on two generators corresponding to the natural
representation of the simple 3-dimensional Lie algebra $\sltwo$. We then
consider the free Lie algebra on three generators, and compute the Lie
invariants of degree $\le 7$ corresponding to the adjoint representation of
$\sltwo$, and the Lie invariants of degree $\le 9$ corresponding to the natural
representation of $\slthree$.  We represent the action of $\sltwo$ and
$\slthree$ on Lie polynomials by computing the coefficient matrix with respect
to the basis of Hall words.  We then use algorithms for linear algebra (row
canonical form, Hermite normal form, lattice basis reduction) to compute a
basis of the nullspace.
\end{abstract}

\keywords{Free Lie algebras, invariant Lie polynomials, representation theory
of simple Lie algebras, computer algebra, Hermite normal form.}

\subjclass[2000]{Primary 17B01. Secondary 13A50, 15A72, 16W22, 17B10, 17B40,
17B65.}

\maketitle


\section{Introduction}

The theory of Lie invariants (that is, elements of the free Lie algebra which
are invariant under the action of some group of automorphisms) has been studied
since the pioneering work of Magnus in 1940 and Wever in 1949. Magnus
\cite[page 147]{Magnus} found two Lie invariants, in degrees 2 and 6, for the
action of the general linear group in two dimensions in its natural
representation:
  \[
  [a,b],
  \qquad
  [[a,[a,b]],[b,[a,b]]].
  \]
Wever \cite{Wever1, Wever2} found a Lie invariant in degree 9 for the action of
the general linear group in three dimensions in its natural representation:
  \[
  [[[[a{,}b]{,}[a{,}c]]{,}[[a{,}b]{,}[b{,}c]]]{,}c]
  +
  [[[[b{,}c]{,}[b{,}a]]{,}[[b{,}c]{,}[c{,}a]]]{,}a]
  +
  [[[[c{,}a]{,}[c{,}b]]{,}[[c{,}a]{,}[a{,}b]]]{,}b].
  \]
He also showed that the space of invariants in degree 9 has dimension 4. Wever
observed that the degree $d$ of a Lie invariant for the action of the general
linear group in $q$ dimensions in its natural representation (that is, in the
free Lie algebra on $q$ generators) must be a multiple of $q$; more precisely,
$d = mq$, where $m \ge 2$ unless $q = 2$. He found that there are no Lie
invariants for $q = 2$ generators in degree $d = 4$; for $q = 3$ generators in
degree $d = 6$; and for $q = d \ge 3$. He gave an explicit formula for the
dimension of the space of invariants for $q = 2$ generators and arbitrary
degree $d$. In 1958, Burrow \cite{Burrow1} showed that there is always a Lie
invariant of degree $d = mq$ for all $m \ge 2$ and $q \ge 2$, except in the two
cases noticed by Wever.  In 1967, Burrow \cite{Burrow2} extended Wever's
formula to the case $d = mq$ for all $m \ge 2$ and $q \ge 2$.  A brief survey
of these developments may be found in Reutenauer \cite[\S 8.6.2]{Reutenauer}.
Lie invariants in the natural representation of the general linear group are
important in group theory owing to the connection between free Lie rings and
the lower central series of free groups; see Magnus et al.~\cite{Magnus}.  We
emphasize that all of these references do not consider the Lie invariants
corresponding to any other irreducible representations of the general linear
group.

By the Shirshov-Witt theorem \cite{Shirshov, ShirshovWorks, Witt2}, we know
that every subalgebra $J$ of a free Lie algebra $L$ is also free, and so the
Lie subalgebra of invariants is a free Lie algebra. We are primarily interested
in the primitive invariants; that is, a set of free generators for $J$. If we
have computed the dimensions of the homogeneous subspaces of $J$ in all degrees
$< n$, then we can use the generalized Witt dimension formula of Kang and Kim
\cite{KangKim} and Jurisich \cite{Jurisich} to compute the dimension of the
subspace of non-primitive invariants in degree $n$; that is, the subspace
generated by the invariants of lower degree. The rapid growth of the number of
Lie invariants of degree $mq$ as a function of $m$ suggests that the free Lie
subalgebra of Lie invariants in $q$ generators may not be finitely generated.
So it is most likely impossible to determine all the Lie invariants, or even
all the primitive invariants, even in the simplest case of the natural
representation of the general linear group in two dimensions.

In 1998, Bremner \cite{Bremner} used computer algebra to determine the Lie
invariants (primitive and non-primitive) of degree $\le 10$ in this simplest
case. In this paper we extend those explicit results to degree 12 (Section
\ref{sl2natural}), and compute the dimensions in degree 14. We also consider
Lie invariants in three variables in two cases:
  \begin{itemize}
  \item For the adjoint representation of the general linear group in two
      dimensions (Section \ref{sl2adjoint}), we obtain explicit invariants
      up to degree 7, and dimensions up to degree 12. This seems to be the
      first time that Lie invariants have been considered in a
      representation other than the natural representation.
  \item For the natural representation of the general linear group in three
      dimensions (Section \ref{sl3natural}), we obtain explicit results up
      to degree 9, and partial results in degree 12.  This seems to be the
      first time that an explicit basis has been computed for the space of
      invariants in degree 9.
  \end{itemize}
These are essentially the only cases where a simple Lie algebra has an
irreducible 3-dimensional representation; the only other possibility is the
dual of the natural representation of $\slthree$, and the spaces of invariants
for that case will be linearly isomorphic to the spaces of invariants for the
natural representation.

Most of our results depend on calculations with the computer algebra system
Maple, especially the packages \texttt{LinearAlgebra} and
\texttt{LinearAlgebra[Modular]}. Our calculations depend on computing the row
canonical form of a large integer matrix over the field of rational numbers,
and extracting the canonical basis of the nullspace. However, in a few cases,
we obtain better results by combining an algorithm for the Hermite normal form
of an integer matrix with the LLL algorithm for lattice basis reduction.  See
Bremner and Peresi \cite{BremnerPeresi} for another application of these
methods to problems on free nonassociative algebras.  A comprehensive reference
on modern computer algebra is von zur Gathen and Gerhard \cite{MCA}.


\section{Preliminaries}

\subsection{The Hall basis of a free Lie algebra}

Our main reference for free Lie algebras is Reutenauer \cite{Reutenauer}; see
also Bahturin \cite{Bahturin}. The Hall basis \cite{Hall} of a free Lie algebra
is a subset of the free magma on an alphabet.

\begin{definition} \cite[\S 4]{Reutenauer}
The \emph{free magma} $M(A)$ on a set of generators $A$ is the set of all
nonassociative words on $A$, and can be identified with the set of complete
rooted binary trees with leaves labeled by elements of $A$. Each binary tree
with at least 2 leaf nodes can be written uniquely as $t = (t',t'')$ where $t'$
and $t''$ are its immediate left and right subtrees. The binary operation $M(A)
\times M(A) \rightarrow M(A)$ is the mapping $(t', t'') \rightarrow t$. There
is a canonical map from $M(A)$ onto $A^{*}$, the set of all associative words
on $A$, defined by $f(a) = a$ if $a\in A$ and $f(t) = f(t') f(t'')$ if $t =
(t',t'') \in M(A) \setminus A$. The \emph{degree} of $t$, denoted $\deg(t)$, is
the length of $f(t)$.
\end{definition}

\begin{definition} \cite{Hall}
Let $M(A)$ be the free magma on the set of generators $A$. A subset $H \subset
M(A)$ is called a \emph{Hall basis} if the following conditions hold:
  \begin{enumerate}
  \item $H$ has a total order $<$;
  \item $A\subset H$;
  \item if $t = (t',t'') \in H \setminus A$ then $t', t'' \in H$;
  \item if $t = (t',t'') \in H \setminus A$ then $t' > t''$;
  \item if $t = (t',t'') \in H \setminus A$ then either $t' \in A$ or
      $(t')'' \le t''$.
  \end{enumerate}
The elements of the Hall basis are called \emph{Hall words}.
\end{definition}

The relation between trees and Hall words is given by interpreting each
non-leaf node of a tree $t$ corresponding to a given Hall word as the Lie
bracket of the Hall words corresponding to its subtrees $t'$ and $t''$. We
follow Hall's original method of constructing the Hall basis inductively by
degree.

\begin{definition} \label{totalorderdefinition}
Let $A = \{ a_1, \dots a_q \}$ and define a total order on $A$ by $a_i < a_j$
if and only if $i < j$. We extend this total order inductively to $M(A)$ by
making $<$ agree with $<$ on $A$, and then for $t, u \in M(A)$ we define $t <
u$ if and only if:
  \begin{enumerate}
  \item either $\deg(t) < \deg(u)$, or
  \item $\deg(t) = \deg(u) = 1$ and $t < u$ in $A$, or
  \item $\deg(t)=\deg(u) \ge 2$ and $t' < u'$, or
  \item $\deg(t)=\deg(u) \ge 2$ and $t' = u'$ and $t''< u''$.
  \end{enumerate}
\end{definition}

\begin{theorem} \emph{\textbf{Hall's Theorem.}} \cite[Theorem 3.1]{Hall}
The Hall words on $A$ form a basis for the free Lie ring generated by $A$.
\end{theorem}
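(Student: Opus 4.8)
The plan is to prove the two halves of the statement separately: that the Hall words $H$ span the free Lie ring $L(A)$ over $\mathbb{Z}$, and that they are $\mathbb{Z}$-linearly independent.

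\emph{Spanning.} Since $L(A)$ is spanned as a $\mathbb{Z}$-module by the images of the elements of the free magma $M(A)$, it suffices to show by induction on $\deg(t)$ that each $t \in M(A)$, regarded as a Lie monomial, is a $\mathbb{Z}$-linear combination of Hall words. The case $\deg(t) = 1$ is trivial, and for $\deg(t) \ge 2$, writing $t = (t',t'')$ and applying the inductive hypothesis to $t'$ and $t''$, we reduce to the case $t', t'' \in H$. There are then three cases: if $t' = t''$ then $t = 0$; if $t' < t''$ then $t = -(t'',t')$, which makes condition (4) hold; and if $t' > t''$ but condition (5) fails, say $t' = (p,q) \notin A$ with $q > t''$, then the Jacobi identity in the form
\[
[[p,q],t''] = [p,[q,t'']] + [[p,t''],q]
\]
rewrites $t$ in terms of two monomials on which we recurse. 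The substance of this half is to exhibit a well-founded order on $M(A)$, refining the order by degree, for which each of these three moves strictly decreases $t$; Hall's original counting order \cite{Hall}, or the order used in Reutenauer \cite[\S 4]{Reutenauer}, does the job. I expect this termination argument to be the main obstacle, because the obvious complexity measures — the degree, or the right-hand factor — do not visibly decrease in the Jacobi step, so one must order monomials by a subtler statistic (for instance by the multiset of their maximal Hall subtrees).

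\emph{Linear independence.} Work inside the free associative ring $\mathbb{Z}\langle A\rangle$, in which $L(A)$ sits as the Lie subring generated by $A$. Using the spanning statement together with the Poincar\'e--Birkhoff--Witt straightening procedure, one shows that the \emph{decreasing products} $h_1 h_2 \cdots h_k$ of Hall words, with $h_1 \ge h_2 \ge \cdots \ge h_k$, span $\mathbb{Z}\langle A\rangle$. To see these are independent, expand each such product as a sum of associative words: the word $f(h_1) f(h_2) \cdots f(h_k)$ occurs with coefficient $\pm 1$, is lexicographically largest among the words that occur, and determines the product (by uniqueness of the factorization of a word into decreasing Hall words), so the spanning set is triangular and hence a basis. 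Alternatively one counts: by Witt's formula for the number of Hall words in each degree together with the cyclotomic identity, the number of decreasing products of total degree $n$ equals $q^n$, the number of associative words of length $n$, so the spanning set is a basis on dimension grounds. In either case the length-one products — the Hall words themselves — are $\mathbb{Z}$-linearly independent in $\mathbb{Z}\langle A\rangle$, hence in $L(A)$. Combining the two halves shows that $H$ is a $\mathbb{Z}$-basis of $L(A)$, which is Hall's Theorem; as byproducts the argument also yields the Poincar\'e--Birkhoff--Witt theorem for the free Lie ring and Witt's dimension formula, the only genuinely delicate ingredient being the choice of well-founded order in the spanning step.
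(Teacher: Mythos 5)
The paper itself does not prove this statement: it quotes Hall's Theorem 3.1 from \cite{Hall} and reproduces only the rewriting algorithm from Hall's proof (Figure \ref{hallalgorithm}), so the comparison is really with Hall's classical argument. Your outline follows that classical route for the spanning half (recursive application of anticommutativity and the Jacobi identity to reduce any magma element to Hall words) and the standard associative-algebra route for independence, but as written both halves rest on unproved steps, and they are precisely the hard parts. In the spanning half you never exhibit the well-founded order, nor verify that the Jacobi rewriting strictly decreases it; you defer this to Hall or Reutenauer and offer a candidate statistic (the multiset of maximal Hall subtrees) without any verification. That termination argument is the substance of this half of the theorem: the paper stresses, immediately after Figure \ref{hallalgorithm}, that one pass of the algorithm does not suffice because the terms $((q,s),p)$ and $((p,s),q)$ need not be Hall words (e.g.\ for $[[[[[b,a],a],a],b],a]$), so the algorithm must recurse, and the decrease of some subtler measure is exactly what has to be proved.

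In the independence half, the two facts you invoke --- that every associative word factors uniquely as a nonincreasing product of foliages of Hall words, and that the expansion of such a decreasing product is triangular with leading term the concatenation of the foliages (coefficient $1$, and whether that term is lexicographically extremal, and in which direction, depends on conventions and itself requires proof) --- are theorems about arbitrary Hall sets of essentially the same depth as the basis theorem; for the Lyndon set they are the Chen--Fox--Lyndon factorization and the standard triangularity lemma, but the Hall sets used in this paper (Hall's original construction) are not Lyndon sets, so you need the general versions as in Reutenauer \cite[\S 4]{Reutenauer}, not a citation-free assertion. Your fallback counting argument is close to circular: Witt's formula gives $\dim L_n$, and the claim that the number of Hall words of degree $n$ (or of decreasing products of total degree $n$) matches the required count is essentially equivalent to the statement being proved unless you supply an independent combinatorial enumeration of Hall trees. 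So the architecture is correct and matches the cited source, but the proposal is an outline: the well-founded order for termination and the unique-factorization/triangularity lemmas must actually be established for it to be a proof.
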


The algorithm in the proof of Theorem 3.1 of \cite{Hall} transforms any
nonassociative word in $M(A)$ into a linear combination of Hall words; see
Figure \ref{hallalgorithm}. This process is crucial for us because it
guarantees that we can always obtain a linear combination of Hall words after
applying an element of $\sltwo$ or $\slthree$ to a Hall word.  We emphasize
that a single call to this algorithm is not always sufficient; the terms
$((q,s),p)$ and $((p,s),q)$ may not be Hall words.  (For example, consider $x =
[[[[[b{,}a]{,}a]{,}a]{,}b]{,}a]$.)  The algorithm must be called recursively
until every word is a Hall word.

\begin{figure}
  \begin{itemize}
  \item[] \texttt{Input}: An arbitrary element $x$ in the free magma $M(A)$.
  \item[] \texttt{Output}: A linear combination of Hall words.
  \end{itemize}
  \begin{enumerate}
  \item If $\deg(x) = 1$ then set $\texttt{result} \leftarrow x$.
  \item If $\deg(x) \ge 2$ then $x = (y,z)$:
    \begin{enumerate}
    \item Set $\texttt{result} \leftarrow 0$.
    \item For each term $au$ in $\texttt{HallForm}(y)$ and each term $bv$
        in $\texttt{HallForm}(z)$ do:
      \begin{enumerate}
      \item[] [\emph{apply anticommutativity}]
      \item Set $\epsilon \leftarrow 1$.
      \item If $u = v$ then set $\texttt{newterm} \leftarrow 0$.
      \item If $u < v$ then set $\texttt{newterm} \leftarrow
          (v,u)$ and $\epsilon \leftarrow -1$.
      \item If $u > v$ then set $\texttt{newterm} \leftarrow
          (u,v)$.
      \item If $\texttt{newterm} \ne 0$ then $\texttt{newterm} =
          (r,s)$ with $r > s$:
        \begin{itemize}
        \item If $\deg(r) = 1$ then $\texttt{result} \leftarrow
          \texttt{result} + \epsilon ab\,(r,s)$
        \item If $\deg(r) > 1$ then $r = (p,q)$:
           \begin{itemize}
           \item[] [\emph{apply the Jacobi identity}]
           \item[] If $s \ge q$ then
             \begin{itemize}
             \item[] $\texttt{result} \leftarrow
             \texttt{result} + \epsilon ab\,((p,q),s)$
             \end{itemize}
           else
             \begin{itemize}
             \item[] $\texttt{result} \leftarrow
             \texttt{result} - \epsilon ab\,((q,s),p) + \epsilon ab\,((p,s),q)$.
             \end{itemize}
           \end{itemize}
        \end{itemize}
      \end{enumerate}
    \end{enumerate}
  \item Return \texttt{result}.
  \end{enumerate}
  \caption{Hall's recursive algorithm $\texttt{HallForm}(x)$}
  \label{hallalgorithm}
\end{figure}

\subsection{Irreducible representations of $\sltwo$}

We recall some basic information about the representation theory of Lie
algebras; our reference is Humphreys \cite{Humphreys}. The simple 3-dimensional
Lie algebra $\sltwo$ has the following standard basis:
  \[
  x =
  \left[
  \begin{array}{rr}
  0 & 1 \\
  0 & 0 \\
  \end{array}
  \right],
  \qquad
  h =
  \left[
  \begin{array}{rr}
  1 & 0 \\
  0 & -1 \\
  \end{array}
  \right],
  \qquad
  y =
  \left[
  \begin{array}{rr}
  0 & 0 \\
  1 & 0 \\
  \end{array}
  \right].
  \]
All Lie brackets in $\sltwo$ follow from the standard commutation relations
using bilinearity and anticommutativity:
  \[
  [h,x] = 2x, \qquad [h,y] = -2y, \qquad [x,y] = h.
  \]
Every finite-dimensional representation $M$ of $\sltwo$ is the direct sum of
its weight spaces $M_w$ for the action of $h$; that is, $M_w$ is the eigenspace
for $h$ with eigenvalue $w$. In this paper we are primarily concerned with the
weight spaces $M_0$ and $M_2$ and the linear map $X\colon M_0 \to M_2$ given by
the action of $x$. For every $n \in \mathbb{Z}$, $n \ge 0$, there is an
irreducible representation $V(n)$ of $\sltwo$ with highest weight $n$ and
dimension $n+1$; the action of $\sltwo$ with respect to the basis $\{ \,
v_{n-2i} \, | \, i=0,\hdots,n \, \}$ of weight vectors in $V(n)$ is as follows:
  \begin{align*}
  &
  h.v_{n-2i} = (n{-}2i) v_{n-2i},
  \\
  &
  x.v_n = 0,
  \qquad
  x.v_{n-2i} = (n{-}i{+}1) v_{n-2i+2}
  \;
  (i=1,\hdots,n),
  \\
  &
  y.v_{n-2i} = (i{+}1) v_{n-2i-2}
  \;
  (i=0,\hdots,n-1) \, ,
  \qquad
  y.v_{-n} = 0.
  \end{align*}
Any irreducible finite-dimensional representation of $\sltwo$ is isomorphic to
$V(n)$ for some $n$. Any finite-dimensional representation of $\sltwo$ is
isomorphic to a direct sum of irreducible representations.

\begin{lemma} \label{surjectivelemma}
If $M$ is a finite-dimensional representation of $\sltwo$, then the $x$-action
map $X\colon M_0 \to M_2$ is surjective.
\end{lemma}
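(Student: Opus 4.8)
The plan is to reduce to the irreducible case and then verify surjectivity on each irreducible summand directly from the explicit formulas for the action of $\sltwo$ on $V(n)$. Since $M$ is finite-dimensional, it decomposes as a direct sum $M \cong \bigoplus_j V(n_j)$, and both the weight space decomposition and the $x$-action respect this direct sum: $M_0 = \bigoplus_j V(n_j)_0$ and $M_2 = \bigoplus_j V(n_j)_2$, with $X$ acting block-diagonally. Hence it suffices to show that $X\colon V(n)_0 \to V(n)_2$ is surjective for every $n \ge 0$.

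Next I would analyze the weight spaces of $V(n)$. The weights occurring in $V(n)$ are $n, n-2, \dots, -n$, so weight $2$ occurs if and only if $n$ is even and $n \ge 2$, and in that case both $V(n)_0$ and $V(n)_2$ are one-dimensional, spanned by $v_0$ and $v_2$ respectively. If $n$ is odd, or if $n = 0$, then $V(n)_2 = 0$ and surjectivity is trivial. So the only case requiring work is $n = 2m$ with $m \ge 1$: here $v_0 = v_{n-2i}$ with $i = m$, and the formula $x.v_{n-2i} = (n-i+1)v_{n-2i+2}$ gives $x.v_0 = (2m - m + 1)v_2 = (m+1)v_2$. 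Since $m \ge 1$, the coefficient $m+1$ is nonzero, so $X$ maps a basis vector of $V(n)_0$ to a nonzero multiple of a basis vector of $V(n)_2$, hence is surjective.

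Assembling these observations: for each summand $V(n_j)$ the map $X$ restricted to $V(n_j)_0 \to V(n_j)_2$ is surjective (either because the target is zero or by the nonvanishing coefficient computation), and a direct sum of surjective maps is surjective, so $X\colon M_0 \to M_2$ is surjective.

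There is no real obstacle here; the only point demanding a modicum of care is the bookkeeping in the index shift $i = m$ when reading off $x.v_0$ from the displayed formula, and confirming that the resulting coefficient $m+1$ is genuinely nonzero for all $m \ge 1$ (equivalently, that we never land in the degenerate situation $n = 0$ where weight $2$ is absent anyway). One could alternatively phrase the whole argument representation-theoretically — $M_0 \to M_2$ fails to be surjective only if some irreducible constituent contributes to $M_2$ without contributing to $M_0$, which is impossible since every weight of an $\sltwo$-module that is at least $2$ is accompanied by all smaller weights of the same parity down to its negative — but the explicit computation is shorter and self-contained given the formulas already recalled above.
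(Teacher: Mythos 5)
Your proof is correct and follows essentially the same route as the paper: reduce to irreducible summands $V(n)$, dismiss the cases where $V(n)_2 = 0$, and verify via the explicit formulas that $x.v_0$ is a nonzero multiple of $v_2$ (your coefficient $m+1$ is exactly the paper's $\tfrac{n}{2}+1$; the paper merely reaches it by applying $x$ to $y.v_2$ instead of directly to $v_0$). No issues.
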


\begin{proof}
We first write $M = V(n_1) \oplus \cdots \oplus V(n_k)$ as the direct sum of
irreducible representations.  For any weight $w \in \mathbb{Z}$ we have $M_w =
V(n_1)_w \oplus \cdots \oplus V(n_k)_w$ and so it suffices to prove the claim
for an irreducible representation $M = V(n)$.  If $n = 0$ or $n$ is odd then
$M_2 = \{0\}$ and the claim is vacuous. If $n \ge 2$ is even then let $m \in
M_2$ be arbitrary; thus $m = c v_2$ for some $c \in \mathbb{C}$.  By the action
of $\sltwo$ on $V(n)$ we have
  \[
  x \,.\, ( y \,.\, v_2 )
  =
  x \,.\, \Big( \frac{n}{2} v_0 \Big)
  =
  \frac{n}{2} ( x \,.\, v_0 )
  =
  \frac{n}{2} \Big( \frac{n}{2}+1 \Big) v_2.
  \]
Since $\frac{n}{2} ( \frac{n}{2}+1 ) \ne 0$, this completes the proof.
\end{proof}

This result holds more generally for any simple (finite-dimensional) Lie
algebra $G$ and any finite-dimensional representation $M$.  Consider a simple
root vector $x_i \in G$ and the $x_i$-action map $X_i\colon M_{\mathbf{0}} \to
M_{\mathbf{w}_i}$ where $\mathbf{w}_i$ is the weight of $x_i$.  There exist
$h_i, y_i \in G$ for which the span of $x_i, h_i, y_i$ is a subalgebra
isomorphic to $\sltwo$.  We regard $M$ as a representation of this subalgebra
and apply Lemma \ref{surjectivelemma} to conclude that $X_i$ is surjective. We
use this result in the cases $G = \sltwo$ and $G = \slthree$.


\section{Lie invariants in the natural representation of $\sltwo$}
\label{sl2natural}

We consider the free Lie algebra $L$ generated by the ordered set $A = \{ a, b
\}$ with $a < b$. The generators $a$ and $b$ are the Hall words of degree 1;
they form a basis of the subspace $L_1$.  We regard the 2-dimensional space
$L_1$ as the natural representation of $\sltwo$ by identifying $a$ and $b$ with
column vectors as follows:
  \[
  a = \left[ \begin{array}{r} 1 \\ 0 \end{array} \right],
  \qquad
  b = \left[ \begin{array}{r} 0 \\ 1 \end{array} \right].
  \]
The natural action of $\sltwo$ on $L_1$ by matrix-vector multiplication gives
  \[
  x.a = 0, \quad x.b = a, \quad
  h.a = a, \quad h.b = -b, \quad
  y.a = b, \quad y.b = 0.
  \]
We define the action of $\sltwo$ on the Hall basis of $L_j$ inductively by
degree. For degree 1, the Hall words are $a$ and $b$, and we use the previous
equations. If $[t,u]$ is a Hall word of degree $j \ge 2$, and $D$ is any
element of $\sltwo$, then we use the derivation rule $D.[t,u] = [D.t,u] +
[t,D.u]$.  This action extends linearly to $L_j$.

\begin{definition}
By the \emph{weight} $w$ of the Hall word $t$, we mean its eigenvalue with
respect to the action by $h$: that is, $h.t = wt$.  The \emph{weight space}
$L_j^w$ is the subspace of $L_j$ spanned by the Hall words of weight $w$.
\end{definition}

\begin{notation}
If $t$ is any Hall word then $\#a(t)$ and $\#b(t)$ denote respectively the
number of $a$'s and $b$'s occurring in $t$.
\end{notation}

\begin{lemma}
We have $L_j^w = \mathrm{span} \{ \, t \mid \#a(t) - \#b(t) = w \, \}$.
\end{lemma}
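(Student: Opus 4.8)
The statement asserts that a Hall word $t$ is a weight vector whose weight equals $\#a(t) - \#b(t)$, so that the span of all Hall words with $\#a(t) - \#b(t) = w$ is exactly the weight space $L_j^w$. The plan is to prove by induction on $\deg(t)$ that each Hall word $t$ satisfies $h.t = (\#a(t) - \#b(t))\,t$; once this is established, the lemma follows immediately, since the Hall words form a basis of $L_j$ (Hall's Theorem), and a basis consisting of eigenvectors for $h$ decomposes $L_j$ into the span of the eigenvectors grouped by eigenvalue.

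For the base case $\deg(t) = 1$, the Hall words are $a$ and $b$, and from the given action $h.a = a = (1-0)a$ and $h.b = -b = (0-1)b$, so the claim holds. For the inductive step, suppose $t = [t', t'']$ is a Hall word of degree $j \ge 2$, with $t'$ and $t''$ Hall words of smaller degree, and assume inductively that $h.t' = w' t'$ and $h.t'' = w'' t''$ where $w' = \#a(t') - \#b(t')$ and $w'' = \#a(t'') - \#b(t'')$. Applying the derivation rule,
\[
h.[t',t''] = [h.t', t''] + [t', h.t''] = w'[t',t''] + w''[t',t''] = (w' + w'')[t',t''].
\]
Since the multiset of letters of $t$ is the disjoint union of those of $t'$ and $t''$, we have $\#a(t) = \#a(t') + \#a(t')$ and likewise for $b$, so $w' + w'' = \#a(t) - \#b(t)$, completing the induction.

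Finally, I would assemble the conclusion: every Hall word $t$ of degree $j$ lies in $L_j^{\#a(t)-\#b(t)}$ by the above, so $\mathrm{span}\{t \mid \#a(t)-\#b(t) = w\} \subseteq L_j^w$. Conversely, any element of $L_j^w$ is a linear combination of Hall words of degree $j$; collecting the terms according to their values of $\#a(t)-\#b(t)$ expresses it as a sum of eigenvectors with distinct eigenvalues, and since the whole sum has eigenvalue $w$, only the terms with $\#a(t)-\#b(t) = w$ can appear. This gives the reverse inclusion. I do not anticipate a genuine obstacle here; the only point requiring minor care is the bookkeeping that the letter-counts are additive under the magma operation, which is immediate from the definition of $f(t) = f(t')f(t'')$ and hence of $\deg$ and the letter counts.
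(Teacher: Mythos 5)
Your proof is correct and follows essentially the same route as the paper, which also argues by induction on the degree using that $a$ and $b$ have weights $1$ and $-1$ and that the $h$-action is a derivation. (Minor typo only: in the additivity step, $\#a(t) = \#a(t') + \#a(t'')$, not $\#a(t') + \#a(t')$.)
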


\begin{proof}
By induction on $j$ since the weight of $a$ is 1 and the weight of $b$ is $-1$.
\end{proof}

Our problem is to determine which linear combinations of the Hall words of
degree $j$ are invariant under the natural action of $\sltwo$. That is, we want
to find the elements $Z \in L_j$ which satisfy $D.Z = 0$ for all $D \in
\sltwo$.

\begin{lemma} \label{xactionkernel}
The Lie invariants of degree $j$ for $\sltwo$ in the natural representation are
the kernel of the linear $x$-action map $X \colon L_j^0 \to L_j^2$ defined by
$X(Z) = x.Z$.
\end{lemma}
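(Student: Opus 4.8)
The plan is to verify the two inclusions separately; the nontrivial one rests on complete reducibility of finite-dimensional $\sltwo$-modules.

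First I would dispose of the easy inclusion. If $Z \in L_j$ is a Lie invariant, then $D.Z = 0$ for every $D \in \sltwo$. Taking $D = h$ gives $h.Z = 0$, so $Z$ lies in the weight space $L_j^0$; taking $D = x$ gives $X(Z) = x.Z = 0$, so $Z \in \ker X$. This uses nothing beyond the definitions.

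For the converse, suppose $Z \in L_j^0$ with $X(Z) = 0$. Since $Z$ has weight $0$ we already have $h.Z = 0$, so it remains only to show $y.Z = 0$; granting that, $D.Z = 0$ for all $D \in \sltwo$ by bilinearity. Here I would invoke the fact, recalled in Section~\ref{sl2natural} (Weyl's theorem), that $L_j$ is finite-dimensional and hence decomposes as $L_j = V(n_1) \oplus \cdots \oplus V(n_k)$, and accordingly $Z = Z_1 + \cdots + Z_k$ with $Z_i \in V(n_i)_0$. From the explicit action of $\sltwo$ on $V(n_i)$, the space $V(n_i)_0$ is $\{0\}$ when $n_i$ is odd and is $\mathbb{C}\,v_0$ when $n_i$ is even; in the latter case $x.v_0 = (\tfrac{n_i}{2}+1)\,v_2$, which is a nonzero element of $V(n_i)_2$ whenever $n_i \ge 2$ (using characteristic zero). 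Because the weight space $L_j^2$ decomposes as the direct sum $\bigoplus_i V(n_i)_2$, the relation $0 = X(Z) = \sum_i x.Z_i$ forces $x.Z_i = 0$, hence $Z_i = 0$, for every $i$ with $n_i \ge 2$. Thus $Z$ lies in the sum of the trivial summands $V(0)$, on each of which all of $\sltwo$ — in particular $y$ — acts as zero; therefore $y.Z = 0$ and $Z$ is a Lie invariant.

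The only real content is in the converse, and its crux is the injectivity of $X$ on the weight-$0$ subspace of each nontrivial irreducible summand, which is precisely the computation $x.v_0 = (\tfrac{n}{2}+1)\,v_2 \ne 0$ already carried out (in the dual form $x.(y.v_2) \ne 0$) in the proof of Lemma~\ref{surjectivelemma}. I do not anticipate any genuine obstacle; one need only be careful to note the reliance on complete reducibility (hence on the ground field $\mathbb{C}$) and on the fact that weight spaces respect the direct-sum decomposition, $L_j^w = \bigoplus_i V(n_i)_w$, which is what licenses matching the components of $X(Z) = 0$ summand by summand.
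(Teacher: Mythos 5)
Your proposal is correct and takes essentially the same route as the paper: both directions rest on Weyl's theorem (complete reducibility of $L_j$) together with the structure of the irreducible modules $V(n)$. The only difference is cosmetic — where the paper notes that $h.Z=0$ and $x.Z=0$ make $Z$ a maximal vector of weight $0$, hence spanning a copy of $V(0)$ so that $y.Z=0$, you reach the same conclusion by decomposing $Z$ over the irreducible summands and using the explicit computation $x.v_0=(\tfrac{n}{2}+1)v_2\neq 0$ for even $n\ge 2$.
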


\begin{proof}
Since $L_j$ is finite-dimensional and $\sltwo$ is semisimple Lie algebra,
Weyl's Theorem implies that $L_j$ is isomorphic to a direct sum of simple
highest weight modules. If $Z \in L_j$ satisfies $D.Z = 0$ for all $D \in
\sltwo$, then clearly $h.Z = 0$, and so $Z$ is in $L_j^0$, the weight space of
weight 0. If $D \ne h$ then it suffices to consider $D = x$ and $D = y$. Since
$x.a = 0$ and $x.b = a$, it is easy to see that the action of $x$ induces a
linear map from $L_j^0$ to $L_j^2$, and the action of $y$ induces a linear map
from $L_j^0$ to $L_j^{-2}$. Thus we need to find all $Z \in L_j^0$ such that
$x.Z = 0$ and $y.Z = 0$. But the equations $h.Z = 0$ and $x.Z = 0$ imply that
$Z$ spans a 1-dimensional simple $\sltwo$-submodule of $L_j^0$ isomorphic to
the highest weight module with highest weight 0, and this implies that $y.Z =
0$. So it suffices to find the $Z \in L_j$ such that $h.Z = 0$ and $x.Z = 0$.
\end{proof}

Bremner \cite{Bremner} determined all the Lie invariants of degree $\le 10$ for
the natural representation of $\sltwo$. By the results of Wever \cite{Wever1,
Wever2}, we know that these invariants can occur only in even degree. We now
extend these computations to degree 12.  We first review the results for degree
$\le 10$: we confirm and simplify previously known results to illustrate our
computational methods.

\begin{lemma}
Any primitive Lie invariant of degree $\le 10$ in the natural representation of
$\sltwo$ is a linear combination of the following six Lie polynomials:
  \begin{align*}
  I_2 &= [b,a],
  \qquad
  I_6 = [[[ba]b][[ba]a]],
  \qquad
  I_{10}^{(1)} = [[[[ba]b][ba]][[[ba]a][ba]]],
  \\
  I_{10}^{(2)}
  &=
  [[[[[ba]a]a]b][[[ba]b][ba]]]
  - 2 [[[[[ba]a]b]b][[[ba]a][ba]]]
  - 3 [[[[[ba]a]b]b][[[[ba]a]a]b]]
  \\
  &\quad
  + [[[[[ba]b]b]b][[[[ba]a]a]a]],
  \\
  I_{10}^{(3)}
  &=
  [[[[[ba]a]a][ba]][[[ba]b]b]]
  - 2 [[[[[ba]a]b][ba]][[[ba]a]b]]
  + [[[[[ba]b]b][ba]][[[ba]a]a]],
  \\
  I_{10}^{(4)}
  &=
  [[[[ba]b][[ba]a]][[[ba]a]b]]
  + [[[[[ba]a]a][[ba]b]][[ba]b]]
  - 2 [[[[[ba]a]b][[ba]a]][[ba]b]]
  \\
  &\quad
  + [[[[[ba]b]b][[ba]a]][[ba]a]].
  \end{align*}

\end{lemma}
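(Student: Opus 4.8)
The plan is to reduce the statement to a finite-dimensional linear algebra computation, then carry it out degree by degree for $j = 2, 4, 6, 8, 10$. By Lemma \ref{xactionkernel}, the Lie invariants of degree $j$ form the kernel of the $x$-action map $X\colon L_j^0 \to L_j^2$, so the first step is, for each even $j \le 10$, to list the Hall words of degree $j$ in weights $0$ and $2$ (equivalently, those with $\#a(t) - \#b(t) = 0$ and $= 2$), giving ordered bases of $L_j^0$ and $L_j^2$. The second step is to compute the matrix of $X$ with respect to these bases: apply the derivation rule $x.[t,u] = [x.t,u] + [t,x.u]$ recursively to each Hall word of weight $0$, and re-express the result in the Hall basis of $L_j^2$ using the algorithm \texttt{HallForm} of Figure \ref{hallalgorithm} (called recursively, as emphasized after Hall's Theorem). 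The third step is to compute the nullspace of this integer matrix — over $\mathbb{Q}$ via row canonical form, which for these small degrees is entirely routine — and read off a basis of invariants. For $j$ odd there is nothing to do by Wever's result (or simply because $L_j^0 = \{0\}$ when $j$ is odd, since $\#a - \#b \equiv j \pmod 2$); and for $j = 4$ one finds the nullspace is zero, recovering Wever's observation.

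Once a $\mathbb{Q}$-basis of the invariant subspace is in hand for each of $j = 2, 6, 8, 10$, the remaining work is bookkeeping: in degree $2$ the kernel is one-dimensional and clearly spanned by $[b,a]$; in degree $6$ it is again one-dimensional, giving $I_6$; in degree $8$ one checks that the only invariants are the (non-primitive) bracket $[I_2, I_6]$ and possibly scalar multiples, so there is no new primitive invariant; and in degree $10$ the kernel has dimension equal to the number of listed polynomials plus the dimension of the non-primitive part. To isolate the \emph{primitive} invariants one subtracts off the subspace spanned by brackets of lower-degree invariants — here $[I_2,[I_2,[I_2,I_6]]]$, $[[I_2,I_6],\text{(deg 2)}]$-type products, and $[I_6, I_2]$-iterates in degree $10$ — and exhibits a complementary basis; that complementary basis is (up to change of basis) the four polynomials $I_{10}^{(1)}, \dots, I_{10}^{(4)}$. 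Finally one verifies directly that each displayed polynomial does lie in the kernel of $X$ and that the six polynomials $I_2, I_6, I_{10}^{(1)}, \dots, I_{10}^{(4)}$ are linearly independent modulo the non-primitive invariants, so that every primitive invariant of degree $\le 10$ is a linear combination of them.

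The main obstacle is not conceptual but organizational and computational: correctly generating the Hall basis in each weight space up to degree $10$ (the number of Hall words grows quickly, and condition (5) in the definition of a Hall basis must be respected), and correctly implementing the recursive reduction to Hall form after each application of $x$ — a single pass of \texttt{HallForm} need not terminate in Hall words, as the remark following Hall's Theorem warns. A secondary subtlety is the separation of primitive from non-primitive invariants in degree $10$: this requires computing the images of the relevant iterated brackets of $I_2$ and $I_6$ in the Hall basis and forming the quotient, and then making a judicious choice of coset representatives so that the answer matches the compact forms displayed in the lemma. All of this is exactly the kind of task the paper delegates to Maple's \texttt{LinearAlgebra} package, so the "proof" is really a verification: one states that the computation was performed and that its output, after simplification, is the list given, and one may additionally certify correctness of the final list by the independent check that $X$ annihilates each $I$ and that the dimensions agree with the generalized Witt formula.
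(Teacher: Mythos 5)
Your proposal follows essentially the same route as the paper: list the Hall-word bases of the weight-$0$ and weight-$2$ subspaces in each even degree $\le 10$, form the $x$-action matrix via the derivation rule and recursive Hall reduction, extract the nullspace by row reduction, and in degrees $8$ and $10$ quotient by the span of brackets of lower-degree invariants ($[I_2,I_6]$ in degree $8$, $[I_2,[I_2,I_6]]$ in degree $10$) to identify the primitive part. The only blemish is your list of degree-$10$ non-primitive generators, where $[I_2,[I_2,[I_2,I_6]]]$ actually has degree $12$; the non-primitive subspace in degree $10$ is just the one-dimensional span of $[I_2,[I_2,I_6]]$ (the paper's $K_{10}$), so this is a bookkeeping slip rather than a flaw in the method.
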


\begin{proof}
For degree 2, there is only one Hall word, namely $I_2 = [b,a]$, and we easily
verify that this is an invariant:
  \[
  x.[b,a] = [x.b,a] + [b,x.a] = [a,a] + [b,0] = 0.
  \]
For degree 4, there is only one Hall word of weight 0, namely $[[[b,a],a],b]$,
and only one Hall word of weight 2, namely $[[[b,a],a],a]$.  We calculate
  \begin{align*}
  x.[[[b,a],a],b]
  &=
  [[[x.b,a],a],b] +
  [[[b,x.a],a],b] +
  [[[b,a],x.a],b] +
  [[[b,a],a],x.b]
  \\
  &=
  [[[a,a],a],b] +
  [[[b,0],a],b] +
  [[[b,a],0],b] +
  [[[b,a],a],a]
  \\
  &=
  0 + 0 + 0 + [[[b,a],a],a].
  \end{align*}
Thus $x.[[[b,a],a],b] \ne 0$ and so there is no invariant in degree 4. (This
also follows from the surjectivity of the $x$-action map; see Lemma
\ref{surjectivelemma}.) For degree 6, we have three Hall words of weight 0, and
two Hall words of weight 2:
  \[
  [[[ba]b][[ba]a]], \;\;
  [[[[ba]a]b][ba]], \;\;
  [[[[[ba]a]a]b]b]; \qquad
  [[[[ba]a]a][ba]], \;\;
  [[[[[ba]a]a]a]b].
  \]
(From now on we omit the commas in Hall words.) We calculate
  \begin{align*}
  &
  x . [[[ba]b][[ba]a]] = 0,
  \qquad
  x . [[[[ba]a]b][ba]] = [[[[b a] a] a] [b a]],
  \\
  &
  x . [[[[[ba]a]a]b]b] = [[[[ba]a]a][ba]] + 2 [[[[[ba]a]a]a]b].
  \end{align*}
Hence the kernel of the $x$-action has dimension 1 and basis $I_6 =
[[[ba]b][[ba]a]]$.  (The invariants of degree $\le 6$ were first found by
Magnus \cite{Magnus}.) For degree 8, we have 8 Hall words of weight 0, and 7 of
weight 2:
  \begin{alignat*}{4}
  &[[[[ba]b]b][[[ba]a]a]], &\quad
  &[[[[ba]a][ba]][[ba]b]], &\quad
  &[[[[ba]b][ba]][[ba]a]], &\quad
  &[[[[[ba]a]a]b][[ba]b]],
  \\
  &[[[[[ba]a]b]b][[ba]a]], &\quad
  &[[[[[ba]a]b][ba]][ba]], &\quad
  &[[[[[[ba]a]a]b]b][ba]], &\quad
  &[[[[[[[ba]a]a]a]b]b]b];
  \\
  &[[[[ba]a]b][[[ba]a]a]], &\quad
  &[[[[ba]a][ba]][[ba]a]], &\quad
  &[[[[[ba]a]a]a][[ba]b]], &\quad
  &[[[[[ba]a]a]b][[ba]a]],
  \\
  &[[[[[ba]a]a][ba]][ba]], &\quad
  &[[[[[[ba]a]a]a]b][ba]], &\quad
  &[[[[[[[ba]a]a]a]a]b]b].
  \end{alignat*}
Using these ordered bases, we compute the matrix $[X]$ representing the
$x$-action map $X\colon L_8^0 \to L_8^2$:
  \[
  [X]
  =
  \left[
  \begin{array}{rrrrrrrr}
  2 & . & . & . & . & . & . & . \\
  . & 1 & 1 & . & 1 & . & . & . \\
  . & . & . & 1 & . & . & . & 2 \\
  . & . & . & 1 & 2 & . & . & . \\
  . & . & . & . & . & 1 & 1 & . \\
  . & . & . & . & . & . & 2 & 3 \\
  . & . & . & . & . & . & . & 3
  \end{array}
  \right]
  \]
This matrix has rank 7, and so its nullspace has dimension 1; a basis for the
nullspace consists of the invariant
  \[
  [[[[ba]a][ba]][[ba]b]] - [[[[ba]b][ba]][[ba]a]].
  \]
But this is the Hall form of $[ I_2, I_6 ] = [ [ba] [[[ba]b][[ba]a]] ]$, the
Lie bracket of the invariants of degree 2 and 6, so there are no primitive
invariants in degree 8. For degree 10, there are 25 words of weight 0, and 20
words of weight 2. The $x$-action matrix $[X]$ has size $20 \times 25$; using
Maple we find that its rank is 20, and so its nullspace has dimension 5. From
the row canonical form of $[X]$ we obtain the canonical basis of the space of
invariants:
  \begin{align*}
  J_{10}^{(1)}
  &=
  [[[[ba]b][ba]][[[ba]a][ba]]],
  \\
  J_{10}^{(2)}
  &=
  [[[[[ba]a]a]b][[[ba]b][ba]]]
  - 2 [[[[[ba]a]b]b][[[ba]a][ba]]]
  - 3 [[[[[ba]a]b]b][[[[ba]a]a]b]]
  \\
  &\quad
  + [[[[[ba]b]b]b][[[[ba]a]a]a]],
  \\
  J_{10}^{(3)}
  &=
  [[[[[ba]a]a][ba]][[[ba]b]b]]
  - 2 [[[[[ba]a]b][ba]][[[ba]a]b]]
  + [[[[[ba]b]b][ba]][[[ba]a]a]],
  \\
  J_{10}^{(4)}
  &=
  [[[[ba]b][[ba]a]][[[ba]a]b]]
  + [[[[[ba]a]a][[ba]b]][[ba]b]]
  - 2 [[[[[ba]a]b][[ba]a]][[ba]b]]
  \\
  &\quad
  + [[[[[ba]b]b][[ba]a]][[ba]a]],
  \\
  J_{10}^{(5)}
  &=
  - [[[[[ba]a][ba]][ba]][[ba]b]]
  + [[[[[ba]b][ba]][ba]][[ba]a]].
  \end{align*}
From the primitive invariants $I_2$ and $I_6$ in degrees 2 and 6 we can obtain
only one invariant in degree 10, namely $[I_2,[I_2,I_6]]$. The Hall form of
this invariant is
  \[
  K_{10}
  =
  2 [[[[ba]b][ba]][[[ba]a][ba]]]]
  - [[[[[ba]a][ba]][ba]][[ba]b]]]
  + [[[[[ba]b][ba]][ba]][[ba]a]]].
  \]
Hence there is a 4-dimensional space of primitive invariants in degree 10: we
can choose any subspace, of the 5-dimensional space with basis $\{
J_{10}^{(1)}, \dots, J_{10}^{(5)} \}$, which is complementary to the
1-dimensional span of $K_{10}$. In particular, we easily check that the
elements $\{ K_{10}, J_{10}^{(1)}, \dots, J_{10}^{(4)} \}$ are linearly
independent, and so we may take $\{ J_{10}^{(1)}, \dots, J_{10}^{(4)} \}$ as
the canonical set of primitive invariants in degree 10: these are the
invariants $I_{10}^{(1)}, \dots, I_{10}^{(4)}$ in the statement of the Lemma.
(The invariants of degree 10 were first found by Bremner \cite{Bremner} in a
slightly more complicated form.)
\end{proof}

We now extend these results to degree 12.

\begin{theorem}
Any linear combination of the following four Lie polynomials is a primitive Lie
invariant of degree 12 in the natural representation of $\sltwo$:
  \allowdisplaybreaks
  \begin{align*}
  I_{12}^{(1)}
  &=
  [[[[[ba]a][ba]][ba]][[[ba]b][ba]]]
  -  [[[[[ba]b][ba]][ba]][[[ba]a][ba]]],
  \\
  I_{12}^{(2)}
  &=
  [[[[[[ba]a][ba]][ba]][ba]][[ba]b]]
  -  [[[[[[ba]b][ba]][ba]][ba]][[ba]a]],
  \\
  I_{12}^{(3)}
  &=
  [[[[[[ba]a]a][ba]][ba]][[[ba]b]b]]
  -2  [[[[[[ba]a]b][ba]][ba]][[[ba]a]b]]
  \\
  &\quad
  +  [[[[[[ba]b]b][ba]][ba]][[[ba]a]a]],
  \\
  I_{12}^{(4)}
  &=
  [[[[[ba]a]b][ba]][[[ba]b][[ba]a]]]
  -  [[[[[[ba]a]a][ba]][[ba]b]][[ba]b]]
  \\
  &\quad
  +2  [[[[[[ba]a]b][ba]][[ba]a]][[ba]b]]
  -  [[[[[[ba]b]b][ba]][[ba]a]][[ba]a]].
  \end{align*}
\end{theorem}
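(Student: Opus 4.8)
The plan is to follow exactly the computational strategy already illustrated for degrees $\le 10$, specialized to degree $12$. First I would enumerate the Hall basis of the weight-zero space $L_{12}^0$ and the weight-two space $L_{12}^2$; since a degree-$12$ weight-$0$ Hall word has $\#a(t)=\#b(t)=6$ and a weight-$2$ word has $\#a(t)=7$, $\#b(t)=5$, this is a finite list that can be generated by Hall's inductive construction (Definition \ref{totalorderdefinition}). Fix ordered bases for both spaces.

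Next I would compute the matrix $[X]$ of the $x$-action map $X\colon L_{12}^0 \to L_{12}^2$ of Lemma \ref{xactionkernel}. For each basis Hall word $t = [t',t'']$, apply the derivation rule $x.[t',t''] = [x.t',t''] + [t',x.t'']$ recursively down to the generators (using $x.a = 0$, $x.b = a$), then run \texttt{HallForm} (Figure \ref{hallalgorithm}), iterated until every term is a Hall word, to re-express $x.t$ in the chosen basis of $L_{12}^2$. The columns of $[X]$ are these coordinate vectors. By Lemma \ref{xactionkernel} the Lie invariants of degree $12$ are exactly $\ker X$, so I would compute the row canonical form of $[X]$ over $\mathbb{Q}$ and read off the canonical nullspace basis; by Lemma \ref{surjectivelemma} the map $X$ is surjective, so $\dim\ker X = \dim L_{12}^0 - \dim L_{12}^2$, which pins down the expected nullity in advance as a consistency check. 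This yields a spanning set of invariants in degree $12$.

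To prove the theorem as stated — that each of $I_{12}^{(1)},\dots,I_{12}^{(4)}$ is a \emph{primitive} invariant — I would then separate the primitive part. From the lower-degree primitive invariants $I_2$ and $I_6$ one builds the non-primitive invariants of degree $12$: the brackets $[I_6,I_6]=0$ (anticommutativity kills it), $[I_2,[I_2,[I_2,I_6]]]$, $[I_2, I_{10}^{(i)}]$ for $i=1,\dots,4$, and any bracket $[I_2,W]$ with $W$ a degree-$10$ invariant. Computing the Hall forms of these and taking their span gives the subspace of non-primitive invariants; the primitive invariants are a complement. I would then verify directly that $\{I_{12}^{(1)},\dots,I_{12}^{(4)}\}$ together with a basis of the non-primitive subspace is linearly independent and spans all of $\ker X$, which simultaneously shows the four listed polynomials lie in $\ker X$ (hence are invariants) and are independent modulo non-primitives (hence primitive). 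Concretely, it suffices to exhibit the coordinate vectors of these four polynomials in the Hall basis, check $[X]\cdot(\text{vector}) = 0$ for each, and check the rank of the combined matrix against the predicted dimension count.

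The main obstacle is purely one of scale and bookkeeping rather than conceptual: the matrix $[X]$ in degree $12$ is large (the number of Hall words of a fixed degree grows quickly — Witt's formula), the recursive \texttt{HallForm} rewriting can blow up intermediate expressions (as the excerpt warns with the example $[[[[[b,a],a],a],b],a]$), and exact rational row reduction on a big integer matrix is where numerical/implementation care — and possibly the switch to Hermite normal form plus LLL mentioned in the introduction — becomes necessary to keep the output invariants as simple as those displayed. Everything else is a routine, if lengthy, specialization of the degree-$\le 10$ arguments already carried out.
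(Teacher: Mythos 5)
Your proposal follows essentially the same route as the paper: compute the $66\times 75$ matrix of the $x$-action map $X\colon L_{12}^0\to L_{12}^2$, obtain the 9-dimensional nullspace of invariants (the dimension agreeing with $75-66$ by surjectivity), span the non-primitive subspace by the Hall forms of $[I_2,W]$ for $W$ in the 5-dimensional space of degree-10 invariants (the $6+6$ bracket $[I_6,I_6]$ vanishing, and $[I_2,[I_2,[I_2,I_6]]]$ already lying in that span), and verify that the four stated polynomials are in the kernel and independent modulo the non-primitives by a rank check. This matches the paper's proof, which identifies the four primitive invariants as (sign changes of) members of the canonical nullspace basis and checks linear independence against the five brackets $K_{12}^{(1)},\dots,K_{12}^{(5)}$.
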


\begin{proof}
There are 335 Hall words in degree 12, with 75 of weight 0 and 66 of weight 2.
We now use Maple for the following calculations:
  \begin{enumerate}
  \item
  compute the action of $x$ on each of the Hall words $w$ of weight 0;
  \item compute the Hall form of each resulting word of weight 2;
  \item
  collect and sort the Hall words of weight 2 appearing in $x.w$;
  \item
  construct the matrix $[X]$ representing the linear map
  $X\colon L_{12}^0 \to L_{12}^2$;
  \item
  compute the row canonical form of the $66 \times 75$ matrix $[X]$;
  \item
  extract the canonical basis of the nullspace of $[X]$;
  \item
  sort the canonical basis vectors by increasing Euclidean norm.
  \end{enumerate}
The rank of $[X]$ is 66, and so the nullspace has dimension 9. The canonical
basis of the subspace of invariants in degree 12 consists of the following
elements:
  \allowdisplaybreaks
  \begin{align*}
  J_{12}^{(1)}
  &=
  -  [[[[[ba]a][ba]][ba]][[[ba]b][ba]]]
  +  [[[[[ba]b][ba]][ba]][[[ba]a][ba]]],
  \\
  J_{12}^{(2)}
  &=
  -  [[[[[[ba]a][ba]][ba]][ba]][[ba]b]]
  +  [[[[[[ba]b][ba]][ba]][ba]][[ba]a]],
  \\
  J_{12}^{(3)}
  &=
  [[[[[ba]a]a][[ba]b]][[[ba]b][ba]]]
  -  [[[[[ba]a]b][[ba]a]][[[ba]b][ba]]]
  \\
  &\quad
  -  [[[[[ba]a]b][[ba]b]][[[ba]a][ba]]]
  +  [[[[[ba]b]b][[ba]a]][[[ba]a][ba]]],
  \\
  J_{12}^{(4)}
  &=
  [[[[[ba]a][ba]][[ba]a]][[[ba]b]b]]
  -  [[[[[ba]a][ba]][[ba]b]][[[ba]a]b]]
  \\
  &\quad
  -  [[[[[ba]b][ba]][[ba]a]][[[ba]a]b]]
  +  [[[[[ba]b][ba]][[ba]b]][[[ba]a]a]],
  \\
  J_{12}^{(5)}
  &=
  [[[[[[ba]a]a][ba]][ba]][[[ba]b]b]]
  -2  [[[[[[ba]a]b][ba]][ba]][[[ba]a]b]]
  \\
  &\quad
  +  [[[[[[ba]b]b][ba]][ba]][[[ba]a]a]],
  \\
  J_{12}^{(6)}
  &=
  -  [[[[[ba]a]b][ba]][[[ba]b][[ba]a]]]
  +  [[[[[[ba]a]a][ba]][[ba]b]][[ba]b]]
  \\
  &\quad
  -2  [[[[[[ba]a]b][ba]][[ba]a]][[ba]b]]
  +  [[[[[[ba]b]b][ba]][[ba]a]][[ba]a]],
  \\
  J_{12}^{(7)}
  &=
  [[[[[ba]a][ba]][[ba]a]][[[ba]b]b]]
  +  [[[[[ba]a][ba]][[ba]b]][[[ba]a]b]]
  \\
  &\quad
  -  [[[[[ba]b][ba]][[ba]a]][[[ba]a]b]]
  -  [[[[[[ba]a]a]a][[ba]b]][[[ba]b]b]]
  \\
  &\quad
  +  [[[[[[ba]a]a]b][[ba]a]][[[ba]b]b]]
  + 2  [[[[[[ba]a]a]b][[ba]b]][[[ba]a]b]]
  \\
  &\quad
  -2  [[[[[[ba]a]b]b][[ba]a]][[[ba]a]b]]
  -  [[[[[[ba]a]b]b][[ba]b]][[[ba]a]a]]
  \\
  &\quad
  +  [[[[[[ba]b]b]b][[ba]a]][[[ba]a]a]],
  \\
  J_{12}^{(8)}
  &=
  -  [[[[[ba]a]a][[ba]a]][[[[ba]b]b]b]]
  +  [[[[[ba]a]a][[ba]b]][[[ba]b][ba]]]
  \\
  &\quad
  +  [[[[[ba]a]a][[ba]b]][[[[ba]a]b]b]]
  +2  [[[[[ba]a]b][[ba]a]][[[[ba]a]b]b]]
  \\
  &\quad
  -2  [[[[[ba]a]b][[ba]b]][[[ba]a][ba]]]
  -2  [[[[[ba]a]b][[ba]b]][[[[ba]a]a]b]]
  \\
  &\quad
  -  [[[[[ba]b]b][[ba]a]][[[[ba]a]a]b]]
  +  [[[[[ba]b]b][[ba]b]][[[[ba]a]a]a]],
  \\
  J_{12}^{(9)}
  &=
   2  [[[[[ba]a][ba]][ba]][[[[ba]a]b]b]]
  -  [[[[[ba]b][ba]][ba]][[[[ba]a]a]b]]
  \\
  &\quad
  -  [[[[[[ba]a]a]a][ba]][[[[ba]b]b]b]]
  +  [[[[[[ba]a]a]b][ba]][[[ba]b][ba]]]
  \\
  &\quad
  +3  [[[[[[ba]a]a]b][ba]][[[[ba]a]b]b]]
  -2  [[[[[[ba]a]b]b][ba]][[[ba]a][ba]]]
  \\
  &\quad
  -3  [[[[[[ba]a]b]b][ba]][[[[ba]a]a]b]]
  +  [[[[[[ba]b]b]b][ba]][[[[ba]a]a]a]].
  \end{align*}
We compute the non-primitive invariants in degree 12 by taking the Lie bracket
of the invariant $I_2$ in degree 2 with the basis invariants $J_{10}^{(1)}, \dots,
J_{10}^{(5)}$ in degree 10, and reducing all the words to their Hall form:
  \allowdisplaybreaks
  \begin{align*}
  [ I_2, J_{10}^{(1)} ]
  \to
  K_{12}^{(1)}
  &=
  [[[[[ba]a][ba]][ba]][[[ba]b][ba]]]
  -  [[[[[ba]b][ba]][ba]][[[ba]a][ba]]],
  \\
  [ I_2, J_{10}^{(2)} ]
  \to
  K_{12}^{(2)}
  &=
  -2  [[[[[ba]a][ba]][ba]][[[[ba]a]b]b]]
  +  [[[[[ba]b][ba]][ba]][[[[ba]a]a]b]]
  \\
  &\quad
  +  [[[[[[ba]a]a]a][ba]][[[[ba]b]b]b]]
  -  [[[[[[ba]a]a]b][ba]][[[ba]b][ba]]]
  \\
  &\quad
  -3  [[[[[[ba]a]a]b][ba]][[[[ba]a]b]b]]
  +2  [[[[[[ba]a]b]b][ba]][[[ba]a][ba]]]
  \\
  &\quad
  +3  [[[[[[ba]a]b]b][ba]][[[[ba]a]a]b]]
  -  [[[[[[ba]b]b]b][ba]][[[[ba]a]a]a]],
  \\
  [ I_2, J_{10}^{(3)} ]
  \to
  K_{12}^{(3)}
  &=
  -  [[[[[[ba]a]a][ba]][ba]][[[ba]b]b]]
  +2  [[[[[[ba]a]b][ba]][ba]][[[ba]a]b]]
  \\
  &\quad
   -  [[[[[[ba]b]b][ba]][ba]][[[ba]a]a]],
  \\
  [ I_2, J_{10}^{(4)} ]
  \to
  K_{12}^{(4)}
  &=
  [[[[[ba]a]b][ba]][[[ba]b][[ba]a]]]
  -2  [[[[[ba]a]a][[ba]b]][[[ba]b][ba]]]
  \\
  &\quad
  +2  [[[[[ba]a]b][[ba]a]][[[ba]b][ba]]]
  +2  [[[[[ba]a]b][[ba]b]][[[ba]a][ba]]]
  \\
  &\quad
  -2  [[[[[ba]b]b][[ba]a]][[[ba]a][ba]]]
  +  [[[[[ba]a][ba]][[ba]a]][[[ba]b]b]]
  \\
  &\quad
  -  [[[[[ba]a][ba]][[ba]b]][[[ba]a]b]]
  -  [[[[[ba]b][ba]][[ba]a]][[[ba]a]b]]
  \\
  &\quad
  +  [[[[[ba]b][ba]][[ba]b]][[[ba]a]a]]
  -  [[[[[[ba]a]a][ba]][[ba]b]][[ba]b]]
  \\
  &\quad
  +2  [[[[[[ba]a]b][ba]][[ba]a]][[ba]b]]
  -  [[[[[[ba]b]b][ba]][[ba]a]][[ba]a]],
  \\
  [ I_2, J_{10}^{(5)} ]
  \to
  K_{12}^{(5)}
  &=
  [[[[[ba]a][ba]][ba]][[[ba]b][ba]]]
  -  [[[[[ba]b][ba]][ba]][[[ba]a][ba]]]
  \\
  &\quad
  +  [[[[[[ba]a][ba]][ba]][ba]][[ba]b]]
  -  [[[[[[ba]b][ba]][ba]][ba]][[ba]a]].
  \end{align*}
Hence there is a 4-dimensional space of primitive invariants in degree 12: we
can choose any subspace, of the 9-dimensional space with basis $\{J_{12}^{(1)},
\dots, J_{12}^{(9)} \}$, which is complementary to the 5-dimensional span of
$\{ K_{12}^{(1)}, \dots, K_{12}^{(5)} \}$. In particular, we easily check that
the elements $\{ K_{12}^{(1)}, \dots, K_{12}^{(5)}, J_{12}^{(1)}, J_{12}^{(2)},
J_{12}^{(5)}, J_{12}^{(6)} \}$ are linearly independent, and so we may take $\{
J_{12}^{(1)}, J_{12}^{(2)}, J_{12}^{(5)}, J_{12}^{(6)} \}$ as the canonical set
of primitive invariants in degree 12. These (with sign changes) are the
invariants $I_{12}^{(1)}$, \dots, $I_{12}^{(4)}$ in the statement of the
Theorem.  This completes the proof.
\end{proof}

We extended these calculations to degree 14, and obtained a 33-dimensional
space of invariants, with a 10-dimensional subspace of non-primitive invariants
spanned by Lie brackets of invariants of lower degree.  Hence there is a
23-dimensional space of primitive invariants in degree 14; see Hu \cite{Hu} for
details.


\section{Lie invariants in the adjoint representation of $\sltwo$}
\label{sl2adjoint}

In this section, we consider for the first time the Lie invariants for a simple
finite-dimensional Lie algebra in a representation other than the natural
representation.  We study the simplest case, namely the adjoint representation
of $\sltwo$.

We consider the free Lie algebra $L$ generated by the ordered set $A = \{ a, b,
c \}$ with $a < b < c$. The generators $a$, $b$ and $c$ are the Hall words of
degree 1; they form a basis of the subspace $L_1$.  We regard the 3-dimensional
space $L_1$ as the adjoint representation of $\sltwo$ by identifying $a$, $b$,
$c$ with $x$, $h$, $y$ respectively. The matrices for adjoint representation of
$\sltwo$ are as follows:
  \[
  \mathrm{ad}(x)
  =
  \left[
  \begin{array}{rrr}
  0 &\!\!\! -2 & 0 \\
  0 &\!\!\!  0 & 1 \\
  0 &\!\!\!  0 & 0 \\
  \end{array}
  \right],
  \quad
  \mathrm{ad}(h)
  =
  \left[
  \begin{array}{rrr}
  2 & 0 &\!\!\!  0 \\
  0 & 0 &\!\!\!  0 \\
  0 & 0 &\!\!\! -2 \\
  \end{array}
  \right],
  \quad
  \mathrm{ad}(y)
  =
  \left[
  \begin{array}{rrr}
  0 & 0 & 0 \\
  -1 & 0 & 0 \\
  0 & 2 & 0 \\
  \end{array}
  \right].
  \]
This gives the action of $\sltwo$ on $L_1$:
  \[
  \begin{array}{lll}
  x.a =  0, &\quad x.b = -2a, &\quad x.c =   b,
  \\
  h.a = 2a, &\quad h.b =   0, &\quad h.c = -2c,
  \\
  y.a = -b, &\quad y.b =  2b, &\quad y.c =   0.
  \end{array}
  \]

\begin{lemma}
We have $L_j^w = \mathrm{span} \{ \, t \mid 2\,\#a(t) - 2\,\#c(t) = w \, \}$.
\end{lemma}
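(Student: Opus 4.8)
The plan is to argue by induction on the degree $j$, in exactly the same way as for the analogous lemma in the natural representation; the only change is in the weights of the generators. First I would dispose of the base case $j = 1$: the Hall words of degree $1$ are $a$, $b$, $c$, and the displayed action of $\sltwo$ on $L_1$ gives $h.a = 2a$, $h.b = 0$, $h.c = -2c$. Since the quantity $2\,\#a(t) - 2\,\#c(t)$ takes the values $2$, $0$, $-2$ for $t = a, b, c$ respectively, each degree-$1$ Hall word $t$ is an $h$-eigenvector with eigenvalue $2\,\#a(t) - 2\,\#c(t)$, so the claim holds for $j = 1$.

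For the inductive step I would use that every Hall word $t$ of degree $j \ge 2$ has the form $t = [t', t'']$ with $t'$ and $t''$ Hall words of strictly smaller degree, and that the counting functions are additive: $\#a(t) = \#a(t') + \#a(t'')$, and likewise for $b$ and $c$. Applying the derivation rule $h.[t', t''] = [h.t', t''] + [t', h.t'']$ together with the induction hypotheses $h.t' = \big(2\,\#a(t') - 2\,\#c(t')\big) t'$ and $h.t'' = \big(2\,\#a(t'') - 2\,\#c(t'')\big) t''$ yields
\[
h.t = \big( 2\,\#a(t') - 2\,\#c(t') + 2\,\#a(t'') - 2\,\#c(t'') \big) t = \big( 2\,\#a(t) - 2\,\#c(t) \big) t .
\]
Hence every Hall word of degree $j$ is an $h$-eigenvector with the asserted eigenvalue.

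It then remains only to pass from individual Hall words to weight spaces. Since the Hall words of degree $j$ form a basis of $L_j$ by Hall's Theorem and each of them is an $h$-eigenvector, grouping this basis by the value of the $h$-eigenvalue is the same as grouping it by the value of the integer $2\,\#a(t) - 2\,\#c(t)$; the span of the group attached to the value $w$ is by definition $L_j^w$, which is therefore $\mathrm{span}\{\, t \mid 2\,\#a(t) - 2\,\#c(t) = w \,\}$. There is no genuine obstacle here — the argument is routine bookkeeping — and the only point requiring care is to use the adjoint-representation generator weights $2, 0, -2$ in place of the natural-representation weights $1, -1$, noting in particular that $b$ has weight $0$ and so does not appear in the formula.
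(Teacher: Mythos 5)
Your proof is correct and follows essentially the same route as the paper, which simply notes that the $h$-action gives $a,b,c$ weights $2,0,-2$ and leaves the induction via the derivation rule implicit (as in the analogous lemma for the natural representation). Your write-up just makes that routine induction and the grouping of the Hall-word basis by eigenvalue explicit.
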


\begin{proof}
The $h$-action shows that $a$, $b$, $c$ have weights 2, 0, $-2$ respectively.
\end{proof}

\begin{lemma}
The Lie invariants of degree $j$ for $\sltwo$ in the adjoint representation are
the kernel of the linear $x$-action map $X \colon L_j^0 \to L_j^2$ defined by
$X(Z) = x.Z$.
\end{lemma}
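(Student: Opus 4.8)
The plan is to follow verbatim the argument of Lemma~\ref{xactionkernel} for the natural representation; the only ingredient special to that case was the explicit list of weights of the generators, and here the preceding lemma supplies the corresponding data ($a$, $b$, $c$ have weights $2$, $0$, $-2$). First I would note that $L_j$ is a finite-dimensional representation of the semisimple Lie algebra $\sltwo$: the adjoint action on $L_1 = \mathrm{span}\{a,b,c\}$ extends to each homogeneous component by the derivation rule $D.[t,u] = [D.t,u] + [t,D.u]$, so Weyl's theorem applies and $L_j$ decomposes as a direct sum of irreducible highest-weight modules $V(n)$.

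Next I would check that $X$ is well defined, i.e.\ that $x.L_j^0 \subseteq L_j^2$: since $[h,x] = 2x$, the operator $x$ shifts the $h$-weight by $+2$, and by the previous lemma the weights occurring in $L_j$ lie in $2\mathbb{Z}$, so the image of the weight-$0$ space lies in the weight-$2$ space. Then I would prove the two inclusions. If $Z \in L_j$ is a Lie invariant then in particular $h.Z = 0$ and $x.Z = 0$, so $Z \in \ker X$. Conversely, suppose $Z \in \ker X$, so that $h.Z = 0$ and $x.Z = 0$. A zero-weight vector annihilated by $x$ spans a one-dimensional $\sltwo$-submodule isomorphic to the trivial module $V(0)$, and is therefore also annihilated by $y$. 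Since $x$, $h$, $y$ span $\sltwo$, this yields $D.Z = 0$ for all $D \in \sltwo$, so $Z$ is a Lie invariant; this proves equality.

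I do not anticipate any real obstacle, since the statement is simply the adjoint-representation analogue of Lemma~\ref{xactionkernel} and the proof transfers once the weights $2$, $0$, $-2$ of $a$, $b$, $c$ are recorded. The one point deserving a moment's thought is the implication ``$h.Z = 0$ and $x.Z = 0 \;\Rightarrow\; y.Z = 0$'', which follows from the classification of the irreducible modules $V(n)$ recalled earlier: writing $Z$ according to the decomposition $L_j = V(n_1) \oplus \cdots \oplus V(n_k)$, each component is a weight-$0$ vector killed by $x$, hence for $n_i > 0$ it would be a highest-weight vector of weight $0$ inside $V(n_i)$, which is impossible; so $Z$ lies in the sum of the trivial summands, on which $y$ acts as $0$.
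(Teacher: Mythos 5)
Your argument is correct and is exactly the one the paper intends: the paper's proof of this lemma simply says ``similar to the proof of Lemma~\ref{xactionkernel},'' and your write-up is that proof transplanted to the adjoint case, including the key step that a weight-$0$ vector killed by $x$ generates a trivial summand and is hence killed by $y$. No gaps; nothing essentially different from the paper's route.
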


\begin{proof}
Similar to the proof of Lemma \ref{xactionkernel}.
\end{proof}

We remark that the dimension formulas of Wever \cite{Wever1, Wever2} and Burrow
\cite{Burrow1, Burrow2} do not apply in this case, since they apply only to the
natural representation. The Lie invariants for the adjoint representation
appear not to have been studied before.

\begin{lemma}
There are no Lie invariants for the adjoint representation of $\sltwo$ in
degree $\le 4$.
\end{lemma}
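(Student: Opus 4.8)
The plan is to combine the preceding lemma, which identifies the Lie invariants of degree $j$ in the adjoint representation with $\ker(X\colon L_j^0 \to L_j^2)$, with the surjectivity of the $x$-action map established in Lemma \ref{surjectivelemma} (applied to $\sltwo$ acting on the finite-dimensional module $L_j$). Surjectivity gives $\dim\ker X = \dim L_j^0 - \dim L_j^2$, so it is enough to check that $\dim L_j^0 = \dim L_j^2$ for $j = 1,2,3,4$; then $X$ is an isomorphism in each of these degrees and the kernel is trivial.

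First I would record that $L_j$ is the direct sum of its multihomogeneous components $L_{(p,q,r)}$, where $(p,q,r) = (\#a(t),\#b(t),\#c(t))$ ranges over the triples with $p+q+r = j$, and that by the weight lemma above $L_j^w$ is the sum of those components with $2p-2r = w$. The dimension of each component is given by the multigraded Witt formula
\[
\dim L_{(p,q,r)} \;=\; \frac{1}{j}\sum_{d \,\mid\, \gcd(p,q,r)} \mu(d)\,\frac{(j/d)!}{(p/d)!\,(q/d)!\,(r/d)!},
\]
where $\mu$ is the M\"obius function (see Reutenauer \cite{Reutenauer}).

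Then I would carry out the small finite enumeration. For $j=1$: $L_1^0 = \mathrm{span}\{b\}$ and $L_1^2 = \mathrm{span}\{a\}$, both of dimension $1$ (indeed $x.b = -2a \ne 0$). For $j=2$: weight $0$ arises only from $(p,q,r) = (1,0,1)$ and weight $2$ only from $(1,1,0)$, each of dimension $1$. For $j=3$: weight $0$ arises from $(1,1,1)$, of dimension $2$, and weight $2$ from $(1,2,0)$ and $(2,0,1)$, of dimensions $1$ and $1$. For $j=4$: weight $0$ arises from $(1,2,1)$ and $(2,0,2)$, of dimensions $3$ and $1$, while weight $2$ arises from $(1,3,0)$ and $(2,1,1)$, of dimensions $1$ and $3$. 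In every degree $\dim L_j^0 = \dim L_j^2$, so $\ker X = 0$ and there are no Lie invariants of degree $\le 4$.

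The argument involves no real obstacle; it is a finite bookkeeping exercise, and the only point needing a little care is applying the Witt formula to the multidegree $(2,0,2)$, where $\gcd = 2$ and the M\"obius correction term is genuinely needed, giving $\tfrac14(6-2) = 1$ rather than $\tfrac14\cdot 6$. As an alternative to the Witt formula one could simply list the (very few) Hall words of weights $0$ and $2$ in each degree $\le 4$ and check that the counts agree, or write down the matrix $[X]$ in each degree and verify directly that it is invertible; these are the methods used for the higher-degree cases treated later in this section.
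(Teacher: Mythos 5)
Your argument is correct, but it takes a different route from the body of the paper's proof. The paper proceeds by brute force: it lists the Hall words of weights $0$ and $2$ in each degree $\le 4$, writes out the $x$-action on every weight-zero Hall word, and observes that the resulting matrix has full rank in each degree; only in a closing remark does it note that the conclusion also follows from Lemma \ref{surjectivelemma} together with the equality of the two weight-space dimensions. You make that remark the whole proof: surjectivity of $X\colon L_j^0\to L_j^2$ plus rank–nullity reduces everything to checking $\dim L_j^0=\dim L_j^2$ for $j\le 4$, and you obtain these dimensions from the multigraded Witt formula rather than by enumerating Hall words (correctly handling the M\"obius correction for the multidegree $(2,0,2)$; the omitted components such as $(0,2,0)$ and $(0,4,0)$ are zero, as your formula confirms). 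The counts $1,1,2,4$ for weight $0$ and $1,1,2,4$ for weight $2$ agree with the paper's Hall-word lists, so the kernel is trivial in each degree. What your approach buys is brevity and a method that scales without writing any matrices; what the paper's explicit computation buys is a concrete check of the Hall-word machinery and the $x$-action data that the later, larger computations in that section rely on. Both are valid proofs of the lemma.
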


\begin{proof} \label{adjointdegree4}
In degree 1, there is one Hall word $b$ of weight 0 and one Hall word $a$ of
weight 2.  Since $x.b = -2a$, the kernel of the $x$-action is zero. In degree
2, there is one Hall word $[c,a]$ of weight 0 and one Hall word $[b,a]$ of
weight 2.  Since $x.[c,a] = [b,a]$, the kernel of the $x$-action is zero. In
degree 3, there are two Hall words of weight 0, namely $[[b,a],c]$ and
$[[c,a],b]$, and two of weight 2, namely $[[b,a],b]$ and $[[c,a],a]$.  We have
$x . [[b,a],c] = [[b,a],b]$ and $x . [[c,a],b] = [[b,a],b] - 2 [[c,a],a]$, and
again the kernel of the $x$-action is zero. In degree 4, there are four Hall
words of weight 0, and four of weight 2,
  \begin{align*}
  &
  [[c, b], [b, a]], \quad
  [[[b, a], b], c], \quad
  [[[c, a], a], c], \quad
  [[[c, a], b], b];
  \\
  &
  [[c, a], [b, a]], \quad
  [[[b, a], a], c], \quad
  [[[b, a], b], b], \quad
  [[[c, a], a], b].
  \end{align*}
The $x$-action is given by these equations:
  \begin{align*}
  &
  x . [[c{,} b]{,} [b{,} a]]
  =
  -2  [[c{,} a]{,} [b{,} a]]{,}
  \qquad
  x . [[[b{,} a]{,} b]{,} c]
  =
  -2  [[[b{,} a]{,} a]{,} c] + [[[b{,} a]{,} b]{,} b]{,}
  \\
  &
  x . [[[c{,} a]{,} a]{,} c]
  =
  [[[b{,} a]{,} a]{,} c] + [[[c{,} a]{,} a]{,} b]{,}
  \\
  &
  x . [[[c{,} a]{,} b]{,} b]
  =
  -2  [[c{,} a]{,} [b{,} a]] + [[[b{,} a]{,} b]{,} b] - 4  [[[c{,} a]{,} a]{,} b].
  \end{align*}
The $x$-action matrix has full rank, and so there are no invariants.  We remark
that these results also follow from Lemma \ref{surjectivelemma} on the
surjectivity of the $x$-action map, since in every case the numbers of Hall
words for weights 0 and 2 are equal.
\end{proof}

\begin{theorem} \label{adjointdegree5}
For the adjoint representation of $\sltwo$, the space of invariants in degree 5
has dimension 1 and basis
  \[
  I_5
  =
  [ [ [b a] b ] [c b] ]
  +
  4
  [ [ [b a] c ] [c a] ]
  +
  2
  [ [ [c a] a ] [c b] ]
  -
  2
  [ [ [c a] b ] [c a] ]
  -
  2
  [ [ [c a] c ] [b a] ]
  -
  [ [ [c b] b ] [b a] ].
  \]
\end{theorem}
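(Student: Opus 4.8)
The plan is to invoke the analogue of Lemma~\ref{xactionkernel} proved in this section: the degree-$5$ Lie invariants for the adjoint representation of $\sltwo$ are precisely the kernel of the $x$-action map $X\colon L_5^0 \to L_5^2$, so it suffices to compute this kernel and exhibit a spanning vector.

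First I would enumerate the relevant Hall words. Since the weight of a Hall word $t$ equals $2\,\#a(t)-2\,\#c(t)$, the weight-$0$ words satisfy $\#a(t)=\#c(t)$ and the weight-$2$ words satisfy $\#a(t)=\#c(t)+1$; combining each condition with $\#a(t)+\#b(t)+\#c(t)=5$ and using that no nonzero Lie monomial of degree $\ge 2$ is supported on a single generator, the weight-$0$ words fall in multidegrees $(1,3,1)$ and $(2,1,2)$, and the weight-$2$ words fall in multidegrees $(1,4,0)$, $(2,2,1)$, $(3,0,2)$. Constructing the Hall basis inductively with the total order of Definition~\ref{totalorderdefinition}, I would list all Hall words in these multidegrees. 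Paralleling the degrees $\le 4$ treated above, I expect $\dim L_5^0 = \dim L_5^2 + 1$; since Lemma~\ref{surjectivelemma} guarantees that $X$ is surjective, the kernel is then forced to be at most one-dimensional, so everything hinges on showing it is nonzero.

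Next, for each weight-$0$ Hall word $w$ I would compute $x.w$ by repeatedly applying the derivation rule $x.[t,u]=[x.t,u]+[t,x.u]$ down to the base cases $x.a=0$, $x.b=-2a$, $x.c=b$, and then rewrite each resulting nonassociative word as a linear combination of Hall words by calling the recursive algorithm \texttt{HallForm} of Figure~\ref{hallalgorithm}, iterating it until every term is genuinely a Hall word. Recording the coefficients of $x.w$ in the ordered basis of $L_5^2$ gives the columns of the matrix $[X]$; I would then compute its rank and the canonical basis of its nullspace, expecting the rank to be $\dim L_5^0 - 1$ and the nullspace to be spanned by the displayed polynomial $I_5$. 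As an independent confirmation --- the one part feasible by hand once $I_5$ is known --- I would expand $x.I_5$, reduce it to Hall form, and verify that it vanishes; this proves $I_5$ is a nonzero invariant and hence a basis of the kernel, which is exactly the space of degree-$5$ invariants.

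The main obstacle is not conceptual but the bookkeeping: generating the Hall basis in each multidegree in the correct order, and correctly iterating \texttt{HallForm} so that the action $x.w$ is expressed exactly in the Hall basis, so that $[X]$ is assembled without error. As in the $\sltwo$-natural case of Section~\ref{sl2natural}, these steps are performed with computer algebra; once $[X]$ is correct, determining its rank and nullspace over $\mathbb{Q}$ is routine linear algebra.
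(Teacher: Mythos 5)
Your proposal is correct and follows essentially the same route as the paper: list the $10$ weight-$0$ and $9$ weight-$2$ Hall words of degree $5$, assemble the $x$-action matrix via the derivation rule and \texttt{HallForm}, and read off the one-dimensional nullspace spanned by $I_5$ (your closing hand-check that $x.I_5=0$ is exactly the paper's subsequent Example). The only minor difference is that you also invoke Lemma \ref{surjectivelemma} to predict the kernel dimension in advance --- in fact surjectivity plus rank--nullity forces the kernel to be exactly, not just at most, one-dimensional --- whereas the paper simply computes the rank of $[X]$ directly.
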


\begin{proof}
In degree 5, there are 10 Hall words of weight 0,
  \begin{align*}
  &
  [[[ba]b][cb]], \quad
  [[[ba]c][ca]], \quad
  [[[ca]a][cb]], \quad
  [[[ca]b][ca]], \quad
  [[[ca]c][ba]],
  \\
  &
  [[[cb]b][ba]], \quad
  [[[[ba]a]c]c], \quad
  [[[[ba]b]b]c], \quad
  [[[[ca]a]b]c], \quad
  [[[[ca]b]b]b],
  \end{align*}
and 9 of weight 2,
  \begin{align*}
  &
  [[[ba]a][cb]], \quad
  [[[ba]b][ca]], \quad
  [[[ba]c][ba]], \quad
  [[[ca]a][ca]], \quad
  [[[ca]b][ba]],
  \\
  &
  [[[[ba]a]b]c], \quad
  [[[[ba]b]b]b], \quad
  [[[[ca]a]a]c], \quad
  [[[[ca]a]b]b].
  \end{align*}
We calculate the $x$-action on the words of weight 0, and express the results
as linear combinations of the words of weight 2.  We obtain the following
matrix $[X]$ representing the map $X \colon L_5^0 \longrightarrow L_5^2$:
  \[
  [X]
  =
  \left[
  \begin{array}{rrrrrrrrrr}
  -2 & \phantom{-}. & 1 & . & \phantom{-}. & . & \phantom{-}1 & . & . & . \\
  -2 & 1 &  . & 1 & . & . & . & . & . & 4 \\
   . & 1 &  . & . & 1 & 2 & . & . & . & . \\
   . & . & -2 & -2 & . & . & . & . & . & . \\
   . & . &  . & 1 & 1 & -4 & . & . & . & -6 \\
   . & . &  . & . & . & . & 2 & -4 & 1 & . \\
   . & . &  . & . & . & . & . & 1 & . & 1\\
   . & . &  . & . & . & . & . & . & -2 & . \\
   . & . &  . & . & . & . & . & . & 1 & -6
  \end{array}
  \right]
 \]
We compute the row canonical form of this matrix, and find that the rank is 9,
so the nullspace has dimension 1.  A basis of the nullspace is the invariant
$I_5$.
\end{proof}

\begin{example}
We can give an explicit direct proof that $I_5$ is an invariant.  We calculate
as follows, using Hall's algorithm to simplify the results:
  \allowdisplaybreaks
  \begin{align*}
  x.[[[ba]b][cb]] &= -2[[[ba]a][cb]] -2[[[ba]b][ca]],
  \\
  x.[[[ba]c][ca]] &=   [[[ba]b][ca]]  +[[[ba]c][ba]],
  \\
  x.[[[ca]a][cb]] &=   [[[ba]a][cb]] -2[[[ca]a][ca]],
  \\
  x.[[[ca]b][ca]] &=   [[[ba]b][ca]] -2[[[ca]a][ca]] +[[[ca]b][ba]],
  \\
  x.[[[ca]c][ba]] &=   [[[ba]c][ba]]  +[[[ca]b][ba]],
  \\
  x.[[[cb]b][ba]] &=  2[[[ba]c][ba]] -4[[[ca]b][ba]].
  \end{align*}
The linear combination with coefficients $1, 4, 2, -2, -2, -1$ gives 0 as
required.
\end{example}

\begin{theorem}
For the adjoint representation of $\sltwo$, the space of invariants in degree 6
has dimension 1 and basis
  \begin{align*}
  I_6
  &=
  4 [[[cb]b][ba]]
  -2 [[[cb][ba]][ca]]
  -4 [[[[ba]a]c][cb]]
  -  [[[[ba]b]b][cb]]
  +4 [[[[ba]c]c][ba]]
  \\
  &\quad
  -8 [[[[ca]a]c][ca]]
  -2 [[[[ca]b]b][ca]]
  -4 [[[[ca]b]c][ba]]
  -  [[[[cb]b]b][ba]].
  \end{align*}
\end{theorem}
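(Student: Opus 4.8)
The plan is to repeat the computational strategy that was already used for degree 5, now in degree 6. First I would enumerate the Hall words of degree 6 in the three generators $a<b<c$, and partition them by weight using the lemma that the weight of a Hall word $t$ equals $2\,\#a(t)-2\,\#c(t)$; only the weight-0 and weight-2 spaces $L_6^0$ and $L_6^2$ matter, since by the analogue of Lemma \ref{xactionkernel} for the adjoint representation the invariants of degree 6 are exactly the kernel of the $x$-action map $X\colon L_6^0\to L_6^2$. By Lemma \ref{surjectivelemma} this map is surjective, so $\dim(\ker X)=\dim L_6^0-\dim L_6^2$; I expect these dimensions to differ by exactly 1, forcing a one-dimensional space of invariants, but I would verify this directly from the row canonical form rather than rely on a dimension count alone.

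Next I would compute $x.t$ for each weight-0 Hall word $t$, using the derivation rule $x.[u,v]=[x.u,v]+[u,x.v]$ together with the base-case action $x.a=0$, $x.b=-2a$, $x.c=b$; each resulting nonassociative word must then be rewritten in the Hall basis by (possibly repeated) calls to the algorithm \texttt{HallForm} of Figure \ref{hallalgorithm}. Assembling the coefficient vectors gives the matrix $[X]$ over $\mathbb{Z}$; I would then compute its row canonical form in Maple, confirm that its rank equals $\dim L_6^2$ (so the nullspace is one-dimensional), and extract the unique (up to scalar) nullspace vector. Clearing denominators and, if desired, normalizing the sign and a chosen leading coefficient produces the explicit invariant $I_6$ displayed in the statement; I would double-check the coefficients $4,-2,-4,-1,4,-8,-2,-4,-1$ against the nine Hall words listed by re-expanding $x.I_6$ and verifying it reduces to $0$ in the Hall basis.

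The main obstacle I anticipate is purely bookkeeping rather than conceptual: correctly generating the complete list of degree-6 Hall words (there are enough of them that an omission or a mislabeled subtree is easy to make), and, more delicately, making sure every word produced by the $x$-action is reduced \emph{fully} to Hall form — the remark after Hall's Theorem warns that one pass of \texttt{HallForm} need not suffice, since the terms $((q,s),p)$ and $((p,s),q)$ coming from the Jacobi step may themselves fail to be Hall words, so the reduction must be iterated to a fixed point. A secondary point worth checking is whether $I_6$ is primitive: the only invariants of lower degree are $I_5$ (degree 5) and nothing in degrees $\le 4$, and no bracket of lower-degree invariants can land in degree 6 (there is no degree-1 invariant to bracket with $I_5$), so $I_6$ is automatically primitive — but I would still note this explicitly, since in later degrees this will no longer be automatic. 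Finally, as a sanity check independent of the nullspace computation, I would confirm that the numbers of weight-0 and weight-2 Hall words differ by exactly one, which is consistent with (though, as in the degree-5 case, not a substitute for) the conclusion $\dim I_6=1$.
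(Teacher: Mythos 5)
Your proposal is correct and follows essentially the same route as the paper: enumerate the Hall words of degree 6 (22 of weight 0, 21 of weight 2), build the $x$-action matrix with the adjoint-action base cases $x.a=0$, $x.b=-2a$, $x.c=b$, reduce with \texttt{HallForm}, and extract the one-dimensional nullspace whose basis is $I_6$, with the surjectivity of $X$ (Lemma \ref{surjectivelemma}) confirming the dimension count $22-21=1$. Your added remarks on full Hall reduction and on primitivity of $I_6$ are consistent with the paper's remark that every invariant of degree $\le 10$ in this representation is primitive.
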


\begin{proof}
In degree 6, there are 22 Hall words of weight 0 and 21 of weight 2. The
$x$-action has a 1-dimensional kernel with $I_6$ as its basis.
\end{proof}

\begin{remark}
Since there are no invariants in degree $\le 4$, only one invariant in degree 5
and only one in degree 6, it follows that the smallest degree for a
non-primitive invariant is 11. That is, every invariant in degree $\le 10$ is
primitive.
\end{remark}

In degree 7, there are 56 Hall words of weight 0, and 51 of weight 2.  Hence
the $x$-action matrix $[X]$ has size $51 \times 56$; we use Maple to compute
that its rank is 51, and so its nullspace has dimension 5. If we compute the
row canonical form of $[X]$ over $\mathbb{Q}$, most of the entries of the RCF
are integers, but some are fractions with denominator 2. We extract the
canonical basis of the nullspace, and clear denominators where necessary to
obtain a basis for the nullspace consisting of vectors with relatively prime
integer components. The squared Euclidean norms of these vectors are 15, 23,
514, 690, 3218.  We can obtain better results by computing an integral basis
for the nullspace using the Hermite normal form combined with the LLL algorithm
for lattice basis reduction.

\begin{definition}
The $m \times n$ matrix $H$ over $\mathbb{Z}$ is in \emph{Hermite Normal Form
(HNF)} if there exists $r \in \mathbb{Z}$, $0 \le r \le m$, and integers $1 \le
j_1 < j_2 < \cdots < j_r \le n$ such that
  \begin{itemize}
  \item $H_{ij} = 0$ for $1 \le i \le r$ and $1 \le j < j_i$,
  \item $H_{i,j_i} \ge 1$ for $1 \le i \le r$,
  \item $0 \le H_{k,j_i} < H_{i,j_i}$ for $1 \le i \le r$ and $1 \le k <
      i$,
  \item $H_{ij} = 0$ for $r{+}1 \le i \le m$ and $1 \le j \le n$.
  \end{itemize}
\end{definition}

\begin{proposition}
If $A$ is an $m \times n$ matrix over $\mathbb{Z}$, then there is a unique $m
\times n$ matrix $H$ over $\mathbb{Z}$ in HNF such that $UA = H$ for some $m
\times m$ matrix $U$ over $\mathbb{Z}$ with $\det(U) = \pm 1$. (The matrix $U$
is in general not unique.)
\end{proposition}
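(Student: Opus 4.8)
The plan is to prove the existence and uniqueness of the Hermite Normal Form by the standard constructive argument via integer row operations, followed by a separate uniqueness argument. For \textbf{existence}, I would proceed column by column from left to right, at each stage maintaining the invariant that the already-processed columns have the required staircase shape and that all subsequent work takes place in the bottom-left ``unprocessed'' submatrix. First I would locate the leftmost column $j_1$ containing a nonzero entry among the rows not yet used as pivot rows; using the extended Euclidean algorithm realized as a sequence of integer row operations (swaps, and adding an integer multiple of one row to another — each of which is an invertible integer matrix of determinant $\pm 1$), I would reduce that column so that a single positive entry remains in the first pivot row and all other entries in that column below it (and in fact in all rows of the unprocessed block) become zero. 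Repeating this for the remaining rows produces integers $1 \le j_1 < j_2 < \cdots < j_r \le n$ and entries $H_{i,j_i} \ge 1$; a final pass of ``reduce above the pivot'' operations — subtracting the appropriate integer multiple of pivot row $i$ from each row $k < i$ — achieves $0 \le H_{k,j_i} < H_{i,j_i}$ without disturbing earlier columns. The product $U$ of all the elementary integer matrices used satisfies $UA = H$ and $\det(U) = \pm 1$.

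For \textbf{uniqueness}, I would argue that $H$ is determined by $A$ alone. The key observation is that two matrices $H$ and $H'$ satisfy $UA = H$, $U'A = H'$ with $U, U'$ unimodular if and only if $H' = VH$ for a unimodular $V$, i.e. $H$ and $H'$ have the same row span as a $\mathbb{Z}$-lattice (more precisely, the same $\mathbb{Z}$-module of rows). So it suffices to show that a matrix in HNF is uniquely determined by its row lattice together with $m$ and $n$. I would do this by induction on the number of columns, or equivalently by tracking the pivot columns: the set $\{j_1, \dots, j_r\}$ is forced because $j_i$ is the smallest column index $j$ such that the $\mathbb{Z}$-module obtained by projecting the row lattice onto coordinates $1, \dots, j$ has rank $i$; the pivot value $H_{i,j_i}$ is forced as the positive generator of a certain quotient of rank-one $\mathbb{Z}$-modules; and once the pivot positions and values are fixed, the entries $H_{k,j_i}$ for $k < i$ are pinned down uniquely by the constraint $0 \le H_{k,j_i} < H_{i,j_i}$ together with membership in the lattice. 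Finally, since $U$ need not be unique (any left multiple of $U$ by a unimodular matrix fixing $H$ works — for instance when $A$ has a nontrivial left kernel over $\mathbb{Q}$, or more simply when $r < m$), I would just note this with a one-line example rather than attempting a characterization.

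The main obstacle is the uniqueness half, and within it the cleanest bookkeeping of the inductive step: one must be careful that ``reducing above the pivot'' and the ordering $j_1 < \cdots < j_r$ interact correctly so that the forced values really are forced, and one must handle the case $r < m$ (extra zero rows at the bottom, which are clearly forced) and the case $j_r < n$ (trailing columns, which impose no new pivots) without special-casing them clumsily. I would phrase the induction on $n$: strip off the last column, apply the inductive hypothesis to the first $n-1$ columns to see that everything except the last column of $H$ is determined, and then observe that the last column is determined because its entries in pivot rows are reduced residues modulo the pivot values and its entry in the first non-pivot row either forces a new pivot (if the column adds rank) or is zero. Since this is a textbook result, I would keep the writeup brief and cite a standard reference (the excerpt's reference to von zur Gathen and Gerhard would be appropriate) for the routine verifications, presenting only the structure above.
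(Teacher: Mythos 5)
Your proposal is essentially the standard textbook argument, and it is sound; note that the paper itself does not prove this proposition at all --- its ``proof'' is a one-line citation to Adkins and Weintraub, \S 5.2 (just as the companion statement about extracting a nullspace basis is delegated to Cohen), so your sketch is in effect reproving the result the paper outsources, by the same route that reference takes: existence via elementary integer row operations realizing the Euclidean algorithm (each of determinant $\pm 1$), and uniqueness via the observation that left multiplication by a unimodular matrix preserves the $\mathbb{Z}$-module generated by the rows, so one only needs that an HNF matrix is determined by its row lattice and its format. One small imprecision in your uniqueness bookkeeping: in the induction on $n$, when the last column is \emph{not} a pivot column the HNF conditions impose no reduction constraint on its entries in pivot rows, so the justification ``entries in pivot rows are reduced residues modulo the pivot values'' does not apply there; the correct reason the last column is still forced is that, in this case, the lattice contains no nonzero vector supported only on the last coordinate, so two candidate rows with equal truncations differ by such a vector and hence coincide (whereas if the column does add rank, it is a new pivot column and the reduced-residue argument applies). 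This is easily repaired and does not affect the overall structure; alternatively, the non-inductive argument comparing $h_k - h'_k$ against the pivot of its first nonzero coordinate handles both cases at once.
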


\begin{proof}
Adkins and Weintraub \cite[\S 5.2]{AdkinsWeintraub}.
\end{proof}

\begin{proposition}
Let $A$ be an $m\times n$ integer matrix, let $H$ be the Hermite normal form of
the transpose $A^t$, and let $U$ be an $n\times n$ matrix with $\det(U) = \pm
1$ such that $U A^t = H$. If $r$ is the rank of $H$, then the last $n-r$ rows
of $U$ form a lattice basis for the integer nullspace of $A$.
\end{proposition}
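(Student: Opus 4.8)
The plan is to unwind the definition of the Hermite normal form and translate the row operations recorded by $U$ into a statement about kernels. First I would set up notation: write $U A^t = H$ with $\det(U) = \pm 1$, let $r$ be the rank of $H$ (equivalently the rank of $A^t$, since $U$ is invertible over $\mathbb{Q}$), and recall that by the defining properties of HNF the first $r$ rows of $H$ are nonzero while the last $n-r$ rows are identically zero. Denote by $u_1, \dots, u_n \in \mathbb{Z}^n$ the rows of $U$, so that row $i$ of the equation reads $u_i A^t = H_i$, the $i$th row of $H$.

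Next I would extract the two directions separately. For $r{+}1 \le i \le n$ we have $u_i A^t = H_i = 0$, which says precisely that each such $u_i$ is a left null vector of $A^t$, i.e. $A u_i^t = (u_i A^t)^t = 0$, so $u_i$ lies in the integer nullspace $\{\, v \in \mathbb{Z}^n : A v^t = 0 \,\}$ of $A$. Thus the last $n-r$ rows of $U$ all lie in the nullspace lattice and span a sublattice of it. For the reverse containment I would argue that these rows already span the full rational nullspace: since $U$ is invertible, $u_1, \dots, u_n$ form a $\mathbb{Q}$-basis of $\mathbb{Q}^n$, and the nonzero rows $H_1, \dots, H_r$ of $H$ are linearly independent (being the nonzero rows of a matrix in row-echelon-type form), so the images $u_1 A^t, \dots, u_r A^t$ are independent; hence $u_1, \dots, u_r$ map to a spanning set of the $r$-dimensional rational row space of $A^t$, and by dimension count $u_{r+1}, \dots, u_n$ span the $(n-r)$-dimensional rational left kernel of $A^t$, which is the rational nullspace of $A$.

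Then I would upgrade ``$\mathbb{Q}$-spanning sublattice'' to ``lattice basis.'' The key point is that the full set $u_1, \dots, u_n$ is a $\mathbb{Z}$-basis of $\mathbb{Z}^n$ because $\det(U) = \pm 1$; I would invoke the standard fact that if $v \in \mathbb{Z}^n$ lies in the $\mathbb{Q}$-span of a subset $S$ of a $\mathbb{Z}$-basis, then $v$ is already a $\mathbb{Z}$-linear combination of $S$. Applying this with $S = \{u_{r+1}, \dots, u_n\}$ and $v$ an arbitrary element of the integer nullspace of $A$ (which we showed lies in the $\mathbb{Q}$-span of $S$) gives that $u_{r+1}, \dots, u_n$ generate the integer nullspace over $\mathbb{Z}$; since they are also $\mathbb{Z}$-linearly independent (being part of a $\mathbb{Z}$-basis), they form a lattice basis.

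The main obstacle, such as it is, is the last step: one must be careful that $\mathbb{Q}$-spanning the nullspace is not by itself enough to conclude that integer vectors in the nullspace are integer combinations of the $u_i$ — this genuinely uses $\det(U) = \pm 1$, i.e. that $U$ is a unimodular matrix and hence its rows form a $\mathbb{Z}$-basis of $\mathbb{Z}^n$, not merely of a finite-index sublattice. Apart from this, the argument is a routine dimension count together with the echelon structure guaranteed by the definition of HNF, so I would keep the write-up short and cite Adkins and Weintraub \cite[\S 5.2]{AdkinsWeintraub} for the structural facts about $U$ and $H$.
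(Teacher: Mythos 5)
Your argument is correct. Note that the paper does not actually prove this proposition at all: it simply cites Cohen, \emph{A Course in Computational Algebraic Number Theory}, Proposition 2.4.9, so any comparison is really with that standard reference, and your write-up is essentially the standard argument given there. Each step checks out: the last $n-r$ rows $u_{r+1},\dots,u_n$ of $U$ satisfy $u_iA^t=0$ because the corresponding rows of $H$ vanish, so they lie in the integer nullspace; they are $\mathbb{Q}$-independent because $U$ is invertible, and since $\operatorname{rank}A=\operatorname{rank}H=r$ the rational nullspace has dimension $n-r$, so they span it; and the passage from $\mathbb{Q}$-span to $\mathbb{Z}$-span is exactly where unimodularity enters, via the fact that the rows of $U$ form a $\mathbb{Z}$-basis of $\mathbb{Z}^n$ and coordinates with respect to a basis are unique, so an integer nullspace vector has zero coefficients on $u_1,\dots,u_r$ and integer coefficients on the rest. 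You correctly flag that $\det(U)=\pm1$ (rather than mere invertibility over $\mathbb{Q}$) is the crux; a matrix whose rows only generate a finite-index sublattice would not suffice. The intermediate remark about $u_1,\dots,u_r$ mapping onto the row space of $A^t$ is harmless but unnecessary, since the dimension count already follows from the independence of $u_{r+1},\dots,u_n$ inside the $(n-r)$-dimensional kernel.
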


\begin{proof}
Cohen \cite[Proposition 2.4.9]{Cohen}.
\end{proof}

In degree 7, if we use this command from Maple's \texttt{LinearAlgebra}
package,
  \[
  \texttt{HermiteForm( xactionmatrix, output='U' );}
  \]
and extract the last 5 rows of the result, then we obtain an integer basis for
the nullspace with the following (sorted) squared Euclidean norms: 130977928,
264952077, 483975356, 571555922, 1935778474. At this point the results are much
worse than those obtained using the RCF over $\mathbb{Q}$.

\begin{theorem}
For the adjoint representation of $\sltwo$, the space of invariants in degree 7
has dimension 5 and the following basis:
  \allowdisplaybreaks
  \begin{align*}
  &
  I_7^{(1)}
  =
     2  [[[ca][ba]][[ca]c]]
     +  [[[ca][ba]][[cb]b]]
    +2  [[[cb][ba]][[ba]c]]
     -  [[[cb][ba]][[ca]b]]
  \\
  &
     -  [[[cb][ca]][[ba]b]]
    -2  [[[cb][ca]][[ca]a]],
  \\
  &
  I_7^{(2)}
  =
        [[[ca][ba]][[cb]b]]
     +  [[[cb][ba]][[ba]c]]
     -  [[[cb][ba]][[ca]b]]
    -2  [[[cb][ca]][[ca]a]]
  \\
  &
     +  [[[[ba]a][cb]][cb]]
     -  [[[[ba]b][ca]][cb]]
    +2  [[[[ca]a][ca]][cb]]
     +  [[[[ca]b][ba]][cb]]
  \\
  &
    -2  [[[[ca]b][ca]][ca]]
    +2  [[[[ca]c][ba]][ca]]
     -  [[[[cb]b][ba]][ca]]
     +  [[[[cb]c][ba]][ba]],
  \\
  &
  I_7^{(3)}
  =
     2  [[[ca][ba]][[ca]c]]
    -2  [[[ca][ba]][[cb]b]]
    +3  [[[cb][ba]][[ca]b]]
     -  [[[cb][ca]][[ba]b]]
  \\
  &
     -  [[[[ba]a]b][[cb]c]]
    +2  [[[[ba]a]c][[cb]b]]
     +  [[[[ba]b]b][[ca]c]]
    -3  [[[[ba]b]c][[ca]b]]
  \\
  &
    +2  [[[[ba]c]c][[ba]b]]
    -2  [[[[ca]a]a][[cb]c]]
    +2  [[[[ca]a]b][[ca]c]]
     -  [[[[ca]a]b][[cb]b]]
  \\
  &
    -4  [[[[ca]a]c][[ba]c]]
    +2  [[[[ca]a]c][[ca]b]]
     -  [[[[ca]b]b][[ba]c]]
    +2  [[[[ca]b]b][[ca]b]]
  \\
  &
     -  [[[[ca]b]c][[ba]b]]
    -2  [[[[ca]b]c][[ca]a]]
    +2  [[[[ca]c]c][[ba]a]]
     -  [[[[cb]b]b][[ca]a]]
  \\
  &
     +  [[[[cb]b]c][[ba]a]],
  \\
  &
  I_7^{(4)}
  =
     8  [[[ca][ba]][[ca]c]]
     -  [[[ca][ba]][[cb]b]]
     +  [[[cb][ba]][[ba]c]]
    +5  [[[cb][ba]][[ca]b]]
  \\
  &
    +2  [[[cb][ca]][[ca]a]]
    -2  [[[[ba]a]b][[cb]c]]
    -8  [[[[ba]a]c][[ca]c]]
     -  [[[[ba]b]b][[cb]b]]
  \\
  &
    -6  [[[[ba]b]c][[ca]b]]
    -8  [[[[ba]c]c][[ca]a]]
    -4  [[[[ca]a]a][[cb]c]]
    +4  [[[[ca]a]b][[ca]c]]
  \\
  &
    -2  [[[[ca]a]b][[cb]b]]
    +8  [[[[ca]a]c][[ba]c]]
    -4  [[[[ca]a]c][[ca]b]]
    +2  [[[[ca]b]b][[ba]c]]
  \\
  &
    +2  [[[[ca]b]b][[ca]b]]
    +2  [[[[ca]b]c][[ba]b]]
    +4  [[[[ca]b]c][[ca]a]]
    +4  [[[[ca]c]c][[ba]a]]
  \\
  &
     +  [[[[cb]b]b][[ba]b]]
    +2  [[[[cb]b]c][[ba]a]]
     +  [[[[ba]a][cb]][cb]]
     -  [[[[ba]b][ca]][cb]]
  \\
  &
    +2  [[[[ca]a][ca]][cb]]
     +  [[[[ca]b][ba]][cb]]
    -2  [[[[ca]b][ca]][ca]]
    +2  [[[[ca]c][ba]][ca]]
  \\
  &
     -  [[[[cb]b][ba]][ca]]
     +  [[[[cb]c][ba]][ba]],
  \\
  &
  I_7^{(5)}
  =
     2  [[[ca][ba]][[cb]b]]
    -8  [[[cb][ba]][[ba]c]]
    +2  [[[cb][ba]][[ca]b]]
    -6  [[[cb][ca]][[ba]b]]
  \\
  &
    -4  [[[cb][ca]][[ca]a]]
    +4  [[[[ba]a][cb]][cb]]
    +6  [[[[ba]b][ca]][cb]]
    +8  [[[[ba]c][ba]][cb]]
  \\
  &
   +32  [[[[ba]c][ca]][ca]]
   +12  [[[[ca]a][ca]][cb]]
   -12  [[[[ca]b][ca]][ca]]
   -20  [[[[ca]c][ba]][ca]]
  \\
  &
    -8  [[[[cb]b][ba]][ca]]
    -4  [[[[cb]c][ba]][ba]]
    +4  [[[[[ba]a]b]c][cb]]
   +16  [[[[[ba]a]c]c][ca]]
  \\
  &
     +  [[[[[ba]b]b]b][cb]]
    +4  [[[[[ba]b]b]c][ca]]
    +4  [[[[[ba]b]c]c][ba]]
    +8  [[[[[ca]a]a]c][cb]]
  \\
  &
    +2  [[[[[ca]a]b]b][cb]]
    -8  [[[[[ca]a]b]c][ca]]
    -8  [[[[[ca]a]c]c][ba]]
    -2  [[[[[ca]b]b]b][ca]]
  \\
  &
    -6  [[[[[ca]b]b]c][ba]]
     -  [[[[[cb]b]b]b][ba]].
  \end{align*}
\end{theorem}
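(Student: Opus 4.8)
The plan is to repeat, in degree~$7$, the computational strategy already used for the smaller degrees. First I would build the Hall basis of $L_7$ by the inductive construction of Definition~\ref{totalorderdefinition}, then separate out the $56$ Hall words of weight~$0$ and the $51$ of weight~$2$ using the weight formula $2\,\#a(t)-2\,\#c(t)$ from the preceding Lemma. For each weight-$0$ Hall word $w$ I would compute $x.w$ via the derivation rule $x.[t,u]=[x.t,u]+[t,x.u]$ together with the degree-$1$ values $x.a=0$, $x.b=-2a$, $x.c=b$, and then rewrite the result as a linear combination of Hall words by applying $\texttt{HallForm}$ of Figure~\ref{hallalgorithm} recursively until every term is a Hall word. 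Arranging these expansions as columns over the ordered weight-$2$ basis yields the $51\times 56$ integer matrix $[X]$ representing $X\colon L_7^0\to L_7^2$.

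The dimension statement then reduces to linear algebra: compute the rank of $[X]$ (exactly over $\mathbb{Q}$, and cross-checked modulo several primes), observe that it equals $51$, and conclude that $\ker X$ — which by the preceding Lemma is exactly the space of degree-$7$ invariants — has dimension $56-51=5$.

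For the explicit basis I would pass from the rational row canonical form to an integral one. Clearing denominators in the canonical nullspace basis gives primitive integer vectors, but only the first two are short; to tame the remaining three I would instead compute the Hermite normal form $H$ of $[X]^t$ together with a unimodular $U$ satisfying $U[X]^t=H$ (Proposition above), take the last $5$ rows of $U$ as a lattice basis for the integer nullspace, and then apply the LLL algorithm to that lattice to obtain a reduced basis, finally sorting the reduced vectors by increasing Euclidean norm. Re-interpreting the five coordinate vectors as Lie polynomials over the Hall basis produces $I_7^{(1)},\dots,I_7^{(5)}$. As an independent check I would substitute each $I_7^{(i)}$ back into the $x$-action, reduce to Hall form, and confirm the result is $0$; linear independence is automatic, since a lattice basis is in particular $\mathbb{Q}$-linearly independent, so the five polynomials do form a basis of the $5$-dimensional invariant space.

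The step I expect to be the real obstacle is obtaining a \emph{readable} basis rather than merely some basis: the rational canonical form is only mediocre, and the raw last rows of $U$ from the Hermite form alone are dramatically worse (squared norms in the hundreds of millions), so the decisive point is to combine the two ingredients in the right order — Hermite normal form first, to land inside the integer nullspace lattice, then LLL, to shorten that lattice basis — and getting this combination to produce the compact invariants displayed above requires some experimentation. A secondary concern is reliability: $[X]$ is large enough that the rank, and hence the dimension~$5$, should be certified by exact arithmetic rather than floating point, and the final output $I_7^{(1)},\dots,I_7^{(5)}$ should be verified by re-applying $x$ and reducing to Hall form.
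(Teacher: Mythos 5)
Your proposal matches the paper's method: construct the $51\times 56$ matrix $[X]$ for the $x$-action on the weight-$0$ Hall words, conclude $\dim\ker X = 56-51 = 5$ from the rank, and then obtain a short integral nullspace basis by combining the Hermite normal form of the transpose (via the unimodular matrix $U$, whose last $5$ rows span the integer nullspace) with LLL lattice basis reduction, exactly as the paper does with Maple's \texttt{HermiteForm} using \texttt{method='integer[reduced]'}. Your added verification step (re-applying $x$ to each $I_7^{(i)}$ and reducing to Hall form) is a sensible check but not a departure from the paper's argument.
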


\begin{proof}
If we use the Maple command
  \[
  \texttt{HermiteForm( xactionmatrix, output='U', method='integer[reduced]' );}
  \]
and extract the last 5 rows of the result, then we obtain an integer basis for
the nullspace with the following (sorted) squared Euclidean norms: 15, 24, 82,
446, 2574.  This is a substantial improvement over the results obtained using
the RCF.  For further details on the application of HNF and LLL to problems in
nonassociative algebra, see Bremner and Peresi \cite{BremnerPeresi}.
\end{proof}

The invariants of degree 8 are calculated explicitly in Hu \cite{Hu}. The
matrix $[X]$ has size $127 \times 136$; by Lemma \ref{surjectivelemma}, its
rank is 127 and so its nullspace has dimension 9. Using the canonical integral
basis of the nullspace obtained from the row canonical form, we obtain the
following sorted list of squared norms of the coefficient vectors of the
invariants: 83, 95, 95, 143, 147, 150, 5030, 18490, 63770. Using the Hermite
normal form with lattice basis reduction gives these squared norms: 32, 47, 47,
62, 83, 143, 1058, 2791, 31295. Again we have found significantly simpler
invariants: the first four invariants using HNF and LLL have coefficient
vectors shorter than the first invariant using the RCF.

We extended these calculations to degree 12, and obtained the results in Table
\ref{sl2adjointdimensions}. The invariants $I_5$ and $I_6$ imply that there is
a 1-dimensional space of non-primitive invariants in degree 11; the invariants
$I_5$ and $I_7^{(1)}, \dots, I_7^{(5)}$ imply that there is a 5-dimensional
space of non-primitive invariants in degree 12. For degree $\ge 9$ we used
modular arithmetic with $p = 101$ in order to ensure that each matrix entry
would only use one byte of memory during the calculation of the row canonical
form of the $x$-action matrix $[X]$.

  \begin{table}
  \begin{center}
  \begin{tabular}{rrrrr}
  degree &\quad weight 0 &\quad weight 2 &\quad invariants &\quad primitive \\
   1 &\quad    1 &\quad    1 &\quad   0 &\quad   0 \\[-2pt]
   2 &\quad    1 &\quad    1 &\quad   0 &\quad   0 \\[-2pt]
   3 &\quad    2 &\quad    2 &\quad   0 &\quad   0 \\[-2pt]
   4 &\quad    4 &\quad    4 &\quad   0 &\quad   0 \\[-2pt]
   5 &\quad   10 &\quad    9 &\quad   1 &\quad   1 \\[-2pt]
   6 &\quad   22 &\quad   21 &\quad   1 &\quad   1 \\[-2pt]
   7 &\quad   56 &\quad   51 &\quad   5 &\quad   5 \\[-2pt]
   8 &\quad  136 &\quad  127 &\quad   9 &\quad   9 \\[-2pt]
   9 &\quad  348 &\quad  323 &\quad  25 &\quad  25 \\[-2pt]
  10 &\quad  890 &\quad  835 &\quad  55 &\quad  55 \\[-2pt]
  11 &\quad 2332 &\quad 2188 &\quad 144 &\quad 143 \\[-2pt]
  12 &\quad 6136 &\quad 5798 &\quad 338 &\quad 333
  \end{tabular}
  \end{center}
  \caption{Dimensions of invariants in the adjoint representation of $\sltwo$}
  \label{sl2adjointdimensions}
  \end{table}


\section{Lie invariants in the natural representation of $\slthree$}
\label{sl3natural}

In this section, we present explicit invariants of minimal degree for
$\slthree$ in the natural representation.  As implied by Wever \cite{Wever1,
Wever2}, there are no invariants in degree $\le 9$, and a 4-dimensional space
of invariants in degree 9.  We find the first explicit basis for this
4-dimensional space.

We use the standard ordered basis of the 8-dimensional simple Lie algebra
$\slthree$:
  \allowdisplaybreaks
  \begin{alignat*}{3}
  x_1
  &=
  \left[
  \begin{array}{rrr}
  0 & 1 & 0 \\
  0 & 0 & 0 \\
  0 & 0 & 0 \\
  \end{array}
  \right],
  &\qquad
  x_2
  &=
  \left[
  \begin{array}{rrr}
  0 & 0 & 0 \\
  0 & 0 & 1 \\
  0 & 0 & 0 \\
  \end{array}
  \right],
  &\qquad
  x_3
  &=
  \left[
  \begin{array}{rrr}
  0 & 0 & 1 \\
  0 & 0 & 0 \\
  0 & 0 & 0 \\
  \end{array}
  \right],
  \\
  h_1
  &=
  \left[
  \begin{array}{rrr}
  1 &\!\!\!\!  0 & 0 \\
  0 &\!\!\!\! -1 & 0 \\
  0 &\!\!\!\!  0 & 0 \\
  \end{array}
  \right],
  &\qquad
  h_2
  &=
  \left[
  \begin{array}{rrr}
  0 & 0 &\!\!\!\!  0 \\
  0 & 1 &\!\!\!\!  0 \\
  0 & 0 &\!\!\!\! -1 \\
  \end{array}
  \right],
  \\
  y_1
  &=
  \left[
  \begin{array}{rrr}
  0 & 0 & 0 \\
  1 & 0 & 0 \\
  0 & 0 & 0 \\
  \end{array}
  \right],
  &\qquad
  y_2
  &=
  \left[
  \begin{array}{rrr}
  0 & 0 & 0 \\
  0 & 0 & 0 \\
  0 & 1 & 0 \\
  \end{array}
  \right],
  &\qquad
  y_3
  &=
  \left[
  \begin{array}{rrr}
  0 & 0 & 0 \\
  0 & 0 & 0 \\
  1 & 0 & 0 \\
  \end{array}
  \right].
  \end{alignat*}
We consider the free Lie algebra $L$ generated by the ordered set $A = \{ a, b,
c \}$ with $a < b < c$. The generators $a, b, c$ are the Hall words of degree
1; they form a basis of the subspace $L_1$.  We regard the 3-dimensional space
$L_1$ as the natural representation of $\slthree$ by identifying $a, b, c$ with
column vectors as follows:
  \[
  a = \left[ \begin{array}{r} 1 \\ 0 \\ 0\end{array} \right],
  \qquad
  b = \left[ \begin{array}{r} 0 \\ 1 \\ 0 \end{array} \right],
  \qquad
  c = \left[ \begin{array}{r} 0 \\ 0 \\ 1 \end{array} \right].
  \]
The natural action of $\slthree$ on $L_1$ by matrix-vector multiplication gives
  \[
  \begin{array}{llllll}
  x_1.a = 0, &\quad x_1.b = a, &\quad x_1.c = 0, &\quad
  x_2.a = 0, &\quad x_2.b = 0, &\quad x_2.c = b,
  \\
  x_3.a = 0, &\quad x_3.b = a, &\quad x_3.c = a,
  &\quad h_1.a = a, &\quad h_1.b = -b, &\quad h_1.c = 0,
  \\
  h_2.a = 0, &\quad h_2.b = b, &\quad h_2.c = -c, &\quad
  y_1.a = b, &\quad y_1.b = 0, &\quad y_1.c = 0,
  \\
  y_2.a = 0, &\quad y_2.b = c, &\quad y_2.c = 0, &\quad
  y_3.a = c, &\quad y_3.b = 0, &\quad y_3.c = 0.
  \end{array}
  \]
We define the action of $\slthree$ on the Hall words of degree $j$, which form
a basis of $L_j$, inductively by degree as in the previous sections. Details
about representations of $\slthree$ may be found in Fulton and Harris
\cite[Lecture 12]{FultonHarris}.

\begin{definition}
By the \emph{weight} $(w_1,w_2)$ of the Hall word $t$, we mean the ordered pair
of its eigenvalues with respect to the action of $h_1$ and $h_2$. The
\emph{weight space} $L_j^{(w_1,w_2)}$ is the subspace of $L_j$ spanned by the
Hall words of weight $(w_1,w_2)$.
\end{definition}

\begin{lemma} \label{sl3weightlemma}
We have
  \[
  L_j^{(w_1,w_2)}
  =
  \mathrm{span} \{ \,
  t
  \mid
  \#a(t) - \#b(t) = w_1, \,
  \#b(t) - \#c(t) = w_2
  \}.
  \]
\end{lemma}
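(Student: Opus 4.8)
The plan is to prove Lemma \ref{sl3weightlemma} by induction on the degree $j$, exactly as in the analogous lemmas for the natural and adjoint representations of $\sltwo$. First I would record the base case $j=1$: the three generators $a,b,c$ are the Hall words of degree $1$, and from the displayed action of $\slthree$ on $L_1$ we read off $h_1.a=a$, $h_1.b=-b$, $h_1.c=0$ and $h_2.a=0$, $h_2.b=b$, $h_2.c=-c$, so $a$ has weight $(1,0)$, $b$ has weight $(-1,1)$, and $c$ has weight $(0,-1)$. In each case one checks directly that $\#a(t)-\#b(t)=w_1$ and $\#b(t)-\#c(t)=w_2$: for $t=a$ this is $(1-0,0-0)=(1,0)$, for $t=b$ it is $(0-1,1-0)=(-1,1)$, and for $t=c$ it is $(0-0,0-1)=(0,-1)$. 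Thus the claim holds in degree $1$.

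For the inductive step, suppose the statement holds for all degrees less than $j$, and let $t=[t',t'']$ be a Hall word of degree $j\ge 2$, so that $t'$ and $t''$ are Hall words of strictly smaller degree with $\deg(t')+\deg(t'')=j$. By the induction hypothesis, $t'$ has weight $(\#a(t')-\#b(t'),\,\#b(t')-\#c(t'))$ and similarly for $t''$. Applying the derivation rule $h_i.[t',t'']=[h_i.t',t'']+[t',h_i.t'']$ and using that $t'$ and $t''$ are $h_i$-eigenvectors, we get that $t$ is an $h_i$-eigenvector with eigenvalue the sum of the two eigenvalues. Since $\#a(t)=\#a(t')+\#a(t'')$ and similarly for $\#b$ and $\#c$, the weight of $t$ is $(\#a(t)-\#b(t),\,\#b(t)-\#c(t))$, which completes the induction. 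It follows that every Hall word of degree $j$ lies in the weight space indexed by its multidegree, and since the Hall words of degree $j$ form a basis of $L_j$ (Hall's Theorem), the weight space $L_j^{(w_1,w_2)}$ is exactly the span of those Hall words $t$ with $\#a(t)-\#b(t)=w_1$ and $\#b(t)-\#c(t)=w_2$.

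There is essentially no obstacle here; the only point requiring a moment's care is confirming that the $h_i$-action on a bracket is additive on eigenvalues, which is immediate from the Leibniz rule together with the fact that each $h_i$ acts semisimply on $L_j$ (guaranteed by Weyl's theorem, or simply verified inductively as above). I would therefore present the proof compactly, in the same one-line style used for the corresponding $\sltwo$ lemmas: the $h_1$- and $h_2$-actions show that $a$, $b$, $c$ have weights $(1,0)$, $(-1,1)$, $(0,-1)$ respectively, and the result follows by induction on $j$ using the derivation rule for the action of $h_1$ and $h_2$ on Hall words.
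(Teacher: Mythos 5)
Your proof is correct and follows essentially the same route as the paper: the paper likewise reads off the weights $(1,0)$, $(-1,1)$, $(0,-1)$ of $a,b,c$ from the action on $L_1$ and concludes that the weight of a Hall word is the sum $\#a(t)(1,0)+\#b(t)(-1,1)+\#c(t)(0,-1)$, the additivity being exactly the inductive derivation-rule argument you spell out. Your version is just slightly more explicit about the induction and the use of Hall's Theorem; no substantive difference.
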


\begin{proof}
From the natural action of $\slthree$ on $L_1$ we see that
  \[
  \mathrm{weight}(a) = (1,0),
  \qquad
  \mathrm{weight}(b) = (-1,1),
  \qquad
  \mathrm{weight}(c) = (0,-1).
  \]
Therefore the weight of an arbitrary Hall word $t$ equals
  \[
  \#a(t)(1,0) + \#b(t)(-1,1) + \#c(t)(0,-1)
  =
  \big( \, \#a(t){-}\#b(t), \, \#b(t){-}\#c(t) \, \big).
  \]
This completes the proof.
\end{proof}

The root system of $\slthree$ with respect to the Cartan subalgebra spanned by
$h_1, h_2$ is the same as the weight system of the adjoint representation. The
adjoint action of $h_1$ and $h_2$ on the simple root vectors $x_1$ and $x_2$ is
as follows:
  \[
  [ h_1, x_1 ] = 2x_1, \qquad
  [ h_1, x_2 ] = -x_2, \qquad
  [ h_2, x_1 ] = -x_1, \qquad
  [ h_2, x_2 ] = 2x_2, \qquad
  \]
From this we see that the weights of $x_1$ and $x_2$ are $(2,-1)$ and $(-1,2)$
respectively. It follows that if $M$ is any representation of $\slthree$ and $v
\in M$ has weight $(0,0)$, then $x_1.v$ and $x_2.v$ have weights $(2,-1)$ and
$(-1,2)$ respectively.

Our problem is to determine which linear combinations of the Hall words of
degree $j$ are invariant under the natural action of $\slthree$. That is, we
want to find the elements $Z \in L_j$ which satisfy $D.Z = 0$ for all $D \in
\slthree$.

\begin{lemma} \label{sl3kernellemma}
The Lie invariants of degree $j$ for $\slthree$ in the natural representation
are the kernel of the linear map
  \[
  X \colon L_j^{(0,0)} \to L_j^{(2,-1)} \oplus L_j^{(-1,2)},
  \qquad
  X(Z) = ( x_1.Z, x_2.Z ).
  \]
\end{lemma}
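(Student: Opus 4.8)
The plan is to follow the template of the proof of Lemma \ref{xactionkernel}: use Weyl's theorem to reduce to irreducible summands, and invoke the standard fact from highest-weight theory that the vectors annihilated by all positive root vectors in an irreducible finite-dimensional module form a one-dimensional space sitting at the highest weight. The only preparatory observation needed is that $\slthree$ is generated as a Lie algebra by the simple root vectors $x_1,x_2$ together with $y_1,y_2$; this is immediate from the explicit matrices, since $[x_1,y_1]=h_1$, $[x_2,y_2]=h_2$, $[x_1,x_2]=x_3$, $[y_2,y_1]=y_3$.

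First I would dispose of the easy inclusion. If $Z\in L_j$ is an invariant, meaning $D.Z=0$ for all $D\in\slthree$, then in particular $h_1.Z=h_2.Z=0$, so $Z\in L_j^{(0,0)}$, and $x_1.Z=x_2.Z=0$, so $Z\in\ker X$. Hence the space of invariants is contained in $\ker X$.

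For the reverse inclusion, suppose $Z\in L_j^{(0,0)}$ with $x_1.Z=x_2.Z=0$. Since each element of $\slthree$ acts on $L_j$ as a derivation, the relation $[x_1,x_2]=x_3$ gives $x_3.Z=x_1.(x_2.Z)-x_2.(x_1.Z)=0$, so $Z$ is annihilated by every positive root vector of $\slthree$. By Weyl's theorem write $L_j=V(\lambda_1)\oplus\cdots\oplus V(\lambda_k)$ as a direct sum of irreducible highest-weight modules, and decompose $Z=Z_1+\cdots+Z_k$ accordingly. Because $x_1,x_2$ preserve this decomposition, each $Z_m$ is a weight-$(0,0)$ vector of $V(\lambda_m)$ annihilated by all positive root vectors; but in an irreducible finite-dimensional module that space is one-dimensional and spanned by a highest-weight vector of weight $\lambda_m$. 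Therefore either $Z_m=0$, or $\lambda_m=(0,0)$ and $V(\lambda_m)$ is the trivial one-dimensional module on which $D.Z_m=0$ for all $D$. In either case $D.Z_m=0$, so $D.Z=0$ and $Z$ is an invariant. (Equivalently, one can argue without the decomposition: the finite-dimensional submodule $U(\slthree).Z$ is generated by the singular vector $Z$ of dominant weight $(0,0)$, hence is the irreducible module of highest weight $(0,0)$, which is trivial, so $\slthree.Z=0$.)

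The only point requiring a real appeal to theory — as opposed to bookkeeping with the matrices — is the uniqueness statement that a nonzero vector of weight $\mu$ annihilated by the positive root vectors in $V(\lambda_m)$ forces $\mu=\lambda_m$; this follows because such a vector generates a highest-weight submodule, which by irreducibility is all of $V(\lambda_m)$, and the highest weight of an irreducible module is unique. Everything else — verifying the generation of $\slthree$, the weight computations, and the derivation identity for $x_3$ — is routine, exactly parallel to the $\sltwo$ cases treated earlier.
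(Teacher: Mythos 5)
Your proof is correct and follows essentially the same route as the paper: the easy inclusion via $h_1,h_2$, the commutator trick $[x_1,x_2]=x_3$ to dispose of $x_3$, and Weyl's theorem to reduce to the fact that a weight-$(0,0)$ vector annihilated by the positive root vectors generates the trivial module. The only cosmetic difference is that the paper deduces $y_i.Z=0$ by applying the $\sltwo$-triple argument of Lemma \ref{xactionkernel} to each $i=1,2,3$, whereas you invoke the rank-two highest-weight fact (a singular vector of weight $(0,0)$ spans a trivial submodule) in one stroke; both are standard and equivalent here.
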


\begin{proof}
Since $L_j$ is finite-dimensional and $\slthree$ is a semisimple Lie algebra,
Weyl's Theorem implies that $L_j$ is isomorphic to a direct sum of simple
highest weight modules. If $Z \in L_j$ satisfies $D.Z = 0$ for all $D \in
\slthree$, then clearly $h_1.Z = h_2.Z = 0$, and so $Z \in L_j^{(0,0)}$, the
weight space of weight $(0,0)$. If $D \ne \mathrm{span}( h_1, h_2 )$, then it
suffices to consider $D = x_i$ and $D = y_i$ for $i = 1, 2, 3$. If $h_i . Z =
x_i . Z = 0$ then also $y_i . Z = 0$, as we saw in the proof of Lemma
\ref{xactionkernel}; so it suffices to consider $x_i$ ($i = 1, 2, 3$).  But in
$\slthree$ we have $[ x_1, x_2 ] = x_3$, and so if $x_1 . Z = x_2 . Z = 0$ then
also $x_3 . Z = 0$. This completes the proof.
\end{proof}

To compute explicit invariants, we consider separately the linear maps
  \[
  X_1 \colon L_j^{(0,0)} \to L_j^{(2,-1)},
  \qquad
  X_2 \colon L_j^{(0,0)} \to L_j^{(-1,2)}.
  \]
The matrices $[X_1]$ and $[X_2]$ represent the linear maps $X_1$ and $X_2$. We
stack $[X_1]$ on top of $[X_2]$ to get the matrix $[X]$ representing the linear
map $X$ of Lemma \ref{sl3kernellemma}. The coefficient vectors of the
invariants are then the vectors in the nullspace of $[X]$.

By Wever \cite{Wever1, Wever2}, we know that there are no invariants in degree
$j$ unless $j$ is a multiple of 3.  We can also see this as follows: Lemma
\ref{sl3weightlemma} implies that any Hall word $t$ of weight $(0,0)$ must
satisfy $\#a(t) = \#b(t) = \#c(t)$ and hence its degree $j = \#a(t) + \#b(t) +
\#c(t)$ must be a multiple of 3. The next two results confirm Wever's
observations for $q = 3$ and $d = 3, 6$.

\begin{lemma}
For the natural representation of $\slthree$, there are no Lie invariants in
degree 3.
\end{lemma}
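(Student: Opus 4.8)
The plan is to argue directly, specializing to $j = 3$ the computational scheme that underlies Lemma~\ref{sl3kernellemma}. First I would enumerate the Hall words of degree $3$: every such word has the form $[\,[t',t'']\,,z\,]$ with $[t',t'']$ a degree-$2$ Hall word and $z \in A$, and checking condition~(5) of the Hall basis one finds there are exactly eight of them (matching $\dim L_3 = 8$). Applying Lemma~\ref{sl3weightlemma}, the condition $\mathrm{weight}(t) = (0,0)$ forces $\#a(t) = \#b(t) = \#c(t) = 1$, and among the eight degree-$3$ Hall words only $[\,[b,a]\,,c\,]$ and $[\,[c,a]\,,b\,]$ are multilinear; hence $\dim L_3^{(0,0)} = 2$. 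The same bookkeeping shows that $L_3^{(2,-1)}$ is spanned by $[\,[c,a]\,,a\,]$ and $L_3^{(-1,2)}$ by $[\,[b,a]\,,b\,]$, so the codomain $L_3^{(2,-1)} \oplus L_3^{(-1,2)}$ of the map $X$ of Lemma~\ref{sl3kernellemma} is also $2$-dimensional.

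Next I would compute the images of the two weight-$(0,0)$ Hall words under $x_1$ and under $x_2$, using the derivation rule $D.[t,u] = [D.t,u] + [t,D.u]$ together with the action of $\slthree$ on $L_1$. In degree $3$ every word produced this way is already a Hall word, so no recursion via Hall's algorithm is needed. Recording the coefficients gives the $2\times 2$ matrix $[X]$ obtained by stacking $[X_1]$ over $[X_2]$, and a short computation shows that $[X]$ has rank $2$. By Lemma~\ref{sl3kernellemma} the space of invariants in degree $3$ is the kernel of $X$, which is therefore $\{0\}$.

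I do not expect a genuine obstacle: the computation is tiny and every intermediate word is already reduced. The one point worth flagging is that one cannot shortcut the argument by invoking only the surjectivity of a single $X_i$ as in Lemma~\ref{surjectivelemma}: since $\dim L_3^{(0,0)} = 2$ strictly exceeds the dimension $1$ of either individual target weight space, surjectivity of $X_1$ alone (or of $X_2$ alone) does not force injectivity. One genuinely needs both maps $Z \mapsto x_1.Z$ and $Z \mapsto x_2.Z$ together, exactly as in the formulation of Lemma~\ref{sl3kernellemma}, and it is the combined $2 \times 2$ matrix that turns out to be nonsingular.
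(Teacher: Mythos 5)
Your proposal is correct and follows essentially the same route as the paper: identify the two weight-$(0,0)$ Hall words $[[b,a],c]$ and $[[c,a],b]$, the single words $[[c,a],a]$ and $[[b,a],b]$ of weights $(2,-1)$ and $(-1,2)$, compute the $x_1$- and $x_2$-actions (which indeed land directly on Hall words), and observe that the stacked $2\times 2$ matrix $[X]$ has rank $2$, so the kernel is $\{0\}$ by Lemma~\ref{sl3kernellemma}. Your closing remark that surjectivity of a single $X_i$ cannot replace the rank computation here is a correct and sensible clarification.
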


\begin{proof}
In degree $3$, there are two Hall words of weight $(0,0)$, namely $[[b,a],c]$
and $[[c,a],b]$.  There is one Hall word of weight $(2,-1)$, namely
$[[c,a],a]$, and one Hall word of weight $(-1,2)$, namely $[[b,a],b]]$. We
calculate that
  \[
  x_1 . [[b{,}a]{,}c] = 0,
  \;\;
  x_1 . [[c{,}a]{,}b] = [[c{,}a]{,}a],
  \;\;
  x_2 . [[b{,}a]{,}c] = [[b{,}a]{,}b],
  \;\;
  x_2 . [[c{,}a]{,}b] = [[b{,}a]{,}b].
  \]
From this we obtain the matrices
  \[
  [X_1] = \left[ \begin{array}{rr} 0 & 1 \end{array} \right],
  \qquad
  [X_2] = \left[ \begin{array}{rr} 1 & 1 \end{array} \right],
  \qquad
  [X] =
  \left[
  \begin{array}{rr}
  0 & 1 \\
  1 & 1 \\
  \end{array}
  \right].
  \]
It is clear that $[X]$ has rank 2 and hence its nullspace is $\{0\}$.
\end{proof}

  \begin{table}
  \small
  \[
  \begin{array}{lllll}
  {[[[ca]b][[ba]c]]}^{9} &
  {[[[ca]c][[ba]b]]}^{12} &
  {[[[cb]b][[ca]a]]}^{19} &
  {[[[cb]c][[ba]a]]}^{22} &
  {[[[ca][ba]][cb]]}^{31} \\
  {[[[cb][ba]][ca]]}^{33} &
  {[[[[ba]a]c][cb]]}^{45} &
  {[[[[ba]b]c][ca]]}^{50} &
  {[[[[ba]c]c][ba]]}^{52} &
  {[[[[ca]a]b][cb]]}^{60} \\
  {[[[[ca]b]b][ca]]}^{65} &
  {[[[[ca]b]c][ba]]}^{67} &
  {[[[[[ba]a]b]c]c]}^{90} &
  {[[[[[ca]a]b]b]c]}^{104}
  \\
  &&&
  \\
  {[[[ca]a][[ba]c]]}^{6} &
  {[[[ca]b][[ca]a]]}^{10} &
  {[[[ca]c][[ba]a]]}^{11} &
  {[[[ca][ba]][ca]]}^{30} &
  {[[[[ba]a]c][ca]]}^{44} \\
  {[[[[ca]a]a][cb]]}^{57} &
  {[[[[ca]a]b][ca]]}^{59} &
  {[[[[ca]a]c][ba]]}^{61} &
  {[[[[[ba]a]a]c]c]}^{87} &
  {[[[[[ca]a]a]b]c]}^{101}
  \\
  &&&
  \\
  {[[[ba]c][[ba]b]]}^{3} &
  {[[[ca]b][[ba]b]]}^{8} &
  {[[[cb]b][[ba]a]]}^{16} &
  {[[[cb][ba]][ba]]}^{32} &
  {[[[[ba]a]b][cb]]}^{42} \\
  {[[[[ba]b]b][ca]]}^{47} &
  {[[[[ba]b]c][ba]]}^{49} &
  {[[[[ca]b]b][ba]]}^{64} &
  {[[[[[ba]a]b]b]c]}^{89} &
  {[[[[[ca]a]b]b]b]}^{103}
  \end{array}
  \]
  \caption{Hall words of weights $(0,0)$, $(2,-1)$, $(-1,2)$ in degree 6}
  \label{sl3hallwords6}
  \end{table}

  \begin{table}
  \[
  \left[
  \begin{array}{rrrrrrrrrrrrrrrrrrrr}
  1 & . & . & . & . & . & . & . & . & . & -1 & 1 & . &  . & . & . & . & . & . & . \\
  . & . & 1 & . & . & . & . & . & . & . &  1 & 1 & . &  . & . & . & . & . & . & . \\
  1 & 2 & . & . & . & . & . & . & . & . &  . & . & 1 &  . & . & . & . & . & . & . \\
  . & . & 1 & . & . & . & . & . & . & . &  . & . & 1 &  . & . & . & . & . & . & . \\
  . & . & . & 1 & . & . & . & . & . & . &  . & . & . &  . & . & . & . & . & . & . \\
  . & . & . & 1 & . & . & . & . & . & . &  . & . & . &  1 & . & . & . & . & . & . \\
  . & . & . & . & 1 & . & . & . & . & . &  . & . & . &  . & 1 & . & . & . & . & . \\
  . & . & . & . & 1 & . & . & . & . & . &  . & . & . &  . & . & 1 & 1 & . & . & . \\
  . & . & . & . & . & . & . & . & . & . &  . & . & . & -1 & . & . & 2 & . & . & . \\
  . & . & . & . & . & 1 & 1 & . & . & . &  . & . & . &  . & 1 & . & . & . & . & . \\
  . & . & . & 1 & . & . & 2 & . & . & . &  . & . & . &  . & . & 1 & . & 1 & . & . \\
  . & . & . & . & . & . & . & 1 & . & . &  . & . & . &  . & . & . & 1 & 1 & . & . \\
  . & . & . & . & . & . & . & . & 1 & . &  . & . & . &  . & 1 & . & . & . & 2 & . \\
  1 & . & . & . & . & . & . & 1 & . & 2 &  . & . & . &  . & . & . & . & . & 1 & 1
  \end{array}
  \right]
  \]
  \caption{The transpose of the matrix $[X]$ for $\slthree$ in degree 6}
  \label{sl3matrix6}
  \end{table}

\begin{lemma}
For the natural representation of $\slthree$, there are no Lie invariants in
degree 6.
\end{lemma}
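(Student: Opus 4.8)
The plan is to invoke Lemma~\ref{sl3kernellemma}, which identifies the invariants in degree~$6$ with the kernel of the linear map $X\colon L_6^{(0,0)} \to L_6^{(2,-1)} \oplus L_6^{(-1,2)}$ given by $X(Z) = (x_1.Z, x_2.Z)$. So the first task is to exhibit bases of the three relevant weight spaces. By Lemma~\ref{sl3weightlemma} a Hall word $w$ of degree~$6$ has weight $(0,0)$ exactly when $\#a(w) = \#b(w) = \#c(w) = 2$, weight $(2,-1)$ exactly when $(\#a(w),\#b(w),\#c(w)) = (3,1,2)$, and weight $(-1,2)$ exactly when $(\#a(w),\#b(w),\#c(w)) = (1,3,2)$. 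Constructing the Hall basis of $L_6$ inductively by degree (using the order of Definition~\ref{totalorderdefinition}) and selecting the words of the right content yields the $14$ words of weight $(0,0)$, the $10$ words of weight $(2,-1)$, and the $10$ words of weight $(-1,2)$ listed in the three blocks of Table~\ref{sl3hallwords6}; the superscripts record their positions in the full Hall basis of $L_6$.

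Next I would compute the matrices $[X_1]$ and $[X_2]$. For each of the $14$ weight-$(0,0)$ Hall words $w$, apply the derivation rule $x_i.[t,u] = [x_i.t, u] + [t, x_i.u]$ together with the degree-$1$ action ($x_1.a = x_1.c = 0$, $x_1.b = a$, $x_2.a = x_2.b = 0$, $x_2.c = b$) to express $x_i.w$ as a linear combination of (generally non-Hall) nonassociative words of weight $(2,-1)$ or $(-1,2)$, and then reduce each such word to Hall form by applying the algorithm of Figure~\ref{hallalgorithm} recursively until every term is a Hall word. Recording the coefficients of $x_1.w$ against the ordered basis of $L_6^{(2,-1)}$ and of $x_2.w$ against that of $L_6^{(-1,2)}$ gives the columns of $[X_1]$ and $[X_2]$; stacking $[X_1]$ on top of $[X_2]$ produces the $20 \times 14$ matrix $[X]$ whose transpose is displayed in Table~\ref{sl3matrix6}.

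Finally I would bring $[X]$ to row canonical form and read off its rank. The claim is that the rank equals $14$, so $[X]$ has full column rank, $X$ is injective, and $\ker X = \{0\}$; by Lemma~\ref{sl3kernellemma} there are therefore no invariants in degree~$6$. (This is consistent with Lemma~\ref{surjectivelemma}: each of $X_1, X_2$ is already surjective onto its $10$-dimensional target, so $\ker X_1$ and $\ker X_2$ are $4$-dimensional, and the content of the proof is precisely that these two subspaces of the $14$-dimensional space $L_6^{(0,0)}$ meet only in $0$.) The main obstacle is purely computational: unlike degree~$3$, there are now $14$ source words and $20$ target words, and, as the paper already warns with the example $[[[[[b,a],a],a],b],a]$, a single pass of Hall's algorithm need not terminate in Hall words, so each $x_i.w$ must be reduced by repeated recursive calls. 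Once the matrix of Table~\ref{sl3matrix6} is assembled, verifying that its rank is $14$ is a routine linear-algebra check, best carried out in Maple.
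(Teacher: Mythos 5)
Your proposal is correct and follows essentially the same route as the paper: enumerate the Hall words of weights $(0,0)$, $(2,-1)$, $(-1,2)$ in degree 6, compute the $x_1$- and $x_2$-actions reduced to Hall form via the algorithm of Figure~\ref{hallalgorithm}, assemble the $20 \times 14$ matrix $[X]$ of Table~\ref{sl3matrix6}, and verify that its rank is 14, so the kernel is trivial and Lemma~\ref{sl3kernellemma} gives the result. One small slip worth fixing: the Hall words of weight $(-1,2)$ have content $(\#a,\#b,\#c) = (2,3,1)$, not $(1,3,2)$ (content $(1,3,2)$ would give weight $(-2,1)$); this does not affect the argument, since the words you actually use are those of Table~\ref{sl3hallwords6}.
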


\begin{proof}
In degree 6, there are 14 Hall words in $a,b,c$ for weight $(0,0)$, and 10 Hall
words for each weight $(2,-1)$ and $(-1,2)$; see Table \ref{sl3hallwords6}. For
each Hall word of weight $(0,0)$ we compute the actions of $x_1$ and $x_2$ and
use Hall's algorithm (Figure \ref{hallalgorithm}) to reduce the results to
linear combinations of Hall words of weights $(2,-1)$ and $(-1,2)$
respectively.  The coefficients give the entries of the $20 \times 14$ matrix
$[X]$; see Table \ref{sl3matrix6} (but note that we have displayed the
transpose).  This matrix has rank 14, and hence its nullspace is $\{ 0 \}$.
\end{proof}

Wever \cite{Wever1} found an invariant in degree 9 for the natural
representation of $\slthree$:
  \[
  [[[[ab][ac]][[ab][bc]]]c]
  +
  [[[[bc][ba]][[bc][ca]]]a]
  +
  [[[[ca][cb]][[ca][ab]]]b].
  \]
Our next result presents the first complete explicit basis for the space of Lie
invariants for the natural representation of $\slthree$ in the minimal degree
9.  We note that the terms of Wever's invariant are not Hall words; the Hall
form of his invariant is just the negative of $I_9^{(1)}$ below.  (It would be
interesting to see if the other invariants $I_9^{(2)}, \dots, I_9^{(4)}$ can be
expressed more compactly without using Hall words.)

\begin{theorem}
For the natural representation of $\slthree$, the space of invariants in degree
9 has dimension 4 and the following basis:
  \allowdisplaybreaks
  \begin{align*}
  I_9^{(1)}
  &=
     [[[[ba]a][cb]][[cb][ca]]]  
  -  [[[[ba]b][ca]][[cb][ca]]]  
  +  [[[[ba]c][ba]][[cb][ca]]]  
  \\
  &\quad
  -  [[[[ba]c][cb]][[ca][ba]]]  
  -  [[[[ca]a][cb]][[cb][ba]]]  
  +  [[[[ca]b][ca]][[cb][ba]]]  
  \\
  &\quad
  +  [[[[ca]b][cb]][[ca][ba]]]  
  -  [[[[ca]c][ba]][[cb][ba]]]  
  -  [[[[cb]b][ca]][[ca][ba]]]  
  \\
  &\quad
  +  [[[[cb]c][ba]][[ca][ba]]], 
  \\
  I_9^{(2)}
  &=
     [[[[ca][ba]][ba]][[cb]c]]  
  -  [[[[ca][ba]][ca]][[cb]b]]  
  -2 [[[[ca][ba]][cb]][[ba]c]]  
  \\
  &\quad
  +  [[[[ca][ba]][cb]][[ca]b]]  
  -  [[[[cb][ba]][ba]][[ca]c]]  
  +  [[[[cb][ba]][ca]][[ba]c]]  
  \\
  &\quad
  +  [[[[cb][ba]][ca]][[ca]b]]  
  -  [[[[cb][ba]][cb]][[ca]a]]  
  -  [[[[cb][ca]][ca]][[ba]b]]  
  \\
  &\quad
  +  [[[[cb][ca]][cb]][[ba]a]], 
  \\
  I_9^{(3)}
  &=
     [[[[ca][ba]][ba]][[cb]c]]  
  -  [[[[ca][ba]][ca]][[cb]b]]  
  +  [[[[ca][ba]][cb]][[ba]c]]  
  \\
  &\quad
  -2 [[[[ca][ba]][cb]][[ca]b]]  
  -  [[[[cb][ba]][ba]][[ca]c]]  
  -  [[[[cb][ba]][ca]][[ba]c]]  
  \\
  &\quad
  +2 [[[[cb][ba]][ca]][[ca]b]]  
  +  [[[[cb][ba]][cb]][[ca]a]]  
  -  [[[[[ba]a]a][cb]][[cb]c]]  
  \\
  &\quad
  +  [[[[[ba]a]b][ca]][[cb]c]]  
  +  [[[[[ba]a]b][cb]][[ca]c]]  
  -  [[[[[ba]a]c][ba]][[cb]c]]  
  \\
  &\quad
  -2 [[[[[ba]a]c][cb]][[ba]c]]  
  +  [[[[[ba]a]c][cb]][[ca]b]]  
  -  [[[[[ba]b]b][ca]][[ca]c]]  
  \\
  &\quad
  +  [[[[[ba]b]c][ba]][[ca]c]]  
  +2 [[[[[ba]b]c][ca]][[ba]c]]  
  -  [[[[[ba]b]c][ca]][[ca]b]]  
  \\
  &\quad
  -  [[[[[ba]b]c][cb]][[ca]a]]  
  -2 [[[[[ba]c]c][ba]][[ba]c]]  
  +  [[[[[ba]c]c][ba]][[ca]b]]  
  \\
  &\quad
  +  [[[[[ba]c]c][cb]][[ba]a]]  
  +  [[[[[ca]a]a][cb]][[cb]b]]  
  -  [[[[[ca]a]b][ca]][[cb]b]]  
  \\
  &\quad
  +  [[[[[ca]a]b][cb]][[ba]c]]  
  -2 [[[[[ca]a]b][cb]][[ca]b]]  
  +  [[[[[ca]a]c][ba]][[cb]b]]  
  \\
  &\quad
  +  [[[[[ca]a]c][cb]][[ba]b]]  
  -  [[[[[ca]b]b][ca]][[ba]c]]  
  +2 [[[[[ca]b]b][ca]][[ca]b]]  
  \\
  &\quad
  +  [[[[[ca]b]b][cb]][[ca]a]]  
  +  [[[[[ca]b]c][ba]][[ba]c]]  
  -2 [[[[[ca]b]c][ba]][[ca]b]]  
  \\
  &\quad
  -  [[[[[ca]b]c][ca]][[ba]b]]  
  -  [[[[[ca]b]c][cb]][[ba]a]]  
  +  [[[[[ca]c]c][ba]][[ba]b]]  
  \\
  &\quad
  -  [[[[[cb]b]b][ca]][[ca]a]]  
  +  [[[[[cb]b]c][ba]][[ca]a]]  
  +  [[[[[cb]b]c][ca]][[ba]a]]  
  \\
  &\quad
  -  [[[[[cb]c]c][ba]][[ba]a]], 
  \\
  I_9^{(4)}
  &=
     [[[[ba]a][ba]][[[cb]c]c]]  
  -  [[[[ba]a][ca]][[[cb]b]c]]  
  -  [[[[ba]a][cb]][[[ba]c]c]]  
  \\
  &\quad
  +  [[[[ba]a][cb]][[[ca]b]c]]  
  -  [[[[ba]b][ba]][[[ca]c]c]]  
  +  [[[[ba]b][ca]][[[ca]b]c]]  
  \\
  &\quad
  -  [[[[ba]b][cb]][[[ca]a]c]]  
  -  [[[[ba]c][ba]][[cb][ca]]]  
  +2 [[[[ba]c][ba]][[[ba]c]c]]  
  \\
  &\quad
  -  [[[[ba]c][ba]][[[ca]b]c]]  
  +2 [[[[ba]c][ca]][[cb][ba]]]  
  -2 [[[[ba]c][ca]][[[ba]b]c]]  
  \\
  &\quad
  +  [[[[ba]c][ca]][[[ca]b]b]]  
  -2 [[[[ba]c][cb]][[ca][ba]]]  
  +2 [[[[ba]c][cb]][[[ba]a]c]]  
  \\
  &\quad
  -  [[[[ba]c][cb]][[[ca]a]b]]  
  -  [[[[ca]a][ba]][[[cb]b]c]]  
  +  [[[[ca]a][ca]][[[cb]b]b]]  
  \\
  &\quad
  -  [[[[ca]a][cb]][[cb][ba]]]  
  +  [[[[ca]a][cb]][[[ba]b]c]]  
  -  [[[[ca]a][cb]][[[ca]b]b]]  
  \\
  &\quad
  -  [[[[ca]b][ba]][[[ba]c]c]]  
  +2 [[[[ca]b][ba]][[[ca]b]c]]  
  -2 [[[[ca]b][ca]][[cb][ba]]]  
  \\
  &\quad
  +  [[[[ca]b][ca]][[[ba]b]c]]  
  -2 [[[[ca]b][ca]][[[ca]b]b]]  
  +2 [[[[ca]b][cb]][[ca][ba]]]  
  \\
  &\quad
  -  [[[[ca]b][cb]][[[ba]a]c]]  
  +2 [[[[ca]b][cb]][[[ca]a]b]]  
  +  [[[[ca]c][ba]][[cb][ba]]]  
  \\
  &\quad
  -  [[[[ca]c][ba]][[[ba]b]c]]  
  +  [[[[ca]c][ca]][[[ba]b]b]]  
  -  [[[[ca]c][cb]][[[ba]a]b]]  
  \\
  &\quad
  -  [[[[cb]b][ba]][[[ca]a]c]]  
  +  [[[[cb]b][ca]][[ca][ba]]]  
  +  [[[[cb]b][ca]][[[ca]a]b]]  
  \\
  &\quad
  -  [[[[cb]b][cb]][[[ca]a]a]]  
  -  [[[[cb]c][ba]][[ca][ba]]]  
  +  [[[[cb]c][ba]][[[ba]a]c]]  
  \\
  &\quad
  -  [[[[cb]c][ca]][[[ba]a]b]]  
  +  [[[[cb]c][cb]][[[ba]a]a]]. 
  \end{align*}
\end{theorem}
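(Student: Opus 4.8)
The plan is to reduce the theorem to a finite linear algebra computation, exactly as in the previous sections, and then carry out that computation with computer algebra. By Lemma~\ref{sl3kernellemma}, the space of degree~$9$ invariants for the natural representation of $\slthree$ is the kernel of the map $X\colon L_9^{(0,0)} \to L_9^{(2,-1)} \oplus L_9^{(-1,2)}$ given by $X(Z) = (x_1.Z, x_2.Z)$. So the first step is to enumerate the Hall words of degree~$9$ in the three generators $a,b,c$ having weight $(0,0)$; by Lemma~\ref{sl3weightlemma} these are precisely the Hall words $t$ with $\#a(t)=\#b(t)=\#c(t)=3$. Similarly one lists the Hall words of weights $(2,-1)$ (those with $\#a=4,\#b=2,\#c=3$) and $(-1,2)$ (those with $\#a=3,\#b=4,\#c=2$). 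This is done by generating the Hall basis inductively by degree as described in Definition~\ref{totalorderdefinition} and filtering by the multidegree.

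Next, for each Hall word $w$ of weight $(0,0)$ I would compute $x_1.w$ and $x_2.w$ using the derivation rule $D.[t,u] = [D.t,u] + [t,D.u]$ together with the action of $\slthree$ on $L_1$, and then apply Hall's algorithm (Figure~\ref{hallalgorithm}), recursively, to rewrite each resulting nonassociative word as a linear combination of Hall words of the appropriate weight. Recording these coefficients gives the matrix $[X_1]$ (rows indexed by weight-$(2,-1)$ Hall words, columns by weight-$(0,0)$ Hall words) and the matrix $[X_2]$ (rows indexed by weight-$(-1,2)$ Hall words); stacking $[X_1]$ on top of $[X_2]$ yields the integer matrix $[X]$ representing $X$. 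Then I would compute the row canonical form of $[X]$ over $\mathbb{Q}$ using Maple, read off that the rank is such that the nullspace has dimension~$4$ (consistent with Wever's count), extract the canonical basis of the nullspace, clear denominators, and sort by Euclidean norm to obtain the four explicit invariants $I_9^{(1)},\dots,I_9^{(4)}$; one also verifies directly that the Hall form of Wever's degree-$9$ invariant equals $-I_9^{(1)}$. Finally, since there are no invariants in degree $3$ or $6$, there are no non-primitive invariants to subtract, so all four basis elements are primitive.

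The main obstacle is not conceptual but computational: in degree~$9$ with three generators the relevant weight spaces are large, so the matrix $[X]$ is sizeable, and — as the text already notes for the $\slthree$ computations and for degree~$\geq 9$ in the $\sltwo$ adjoint case — one must manage memory carefully, e.g.\ by working modulo a prime such as $p = 101$ to determine the rank and then recovering the kernel over $\mathbb{Q}$. A second, more delicate point is the correctness of the iterated application of Hall's algorithm: as emphasized after Hall's Theorem, a single call to $\texttt{HallForm}$ need not return a linear combination of Hall words (the terms $((q,s),p)$ and $((p,s),q)$ produced by the Jacobi step may themselves fail to be Hall words), so the rewriting must be iterated until every term is genuinely a Hall word before the coefficients of $[X]$ can be read off. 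Once $[X]$ is correctly assembled, the remaining steps — row reduction, nullspace extraction, denominator clearing, and the observation that dimension~$4$ matches Wever — are routine, and the explicit invariants displayed in the statement are simply the output of this procedure.
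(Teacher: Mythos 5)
Your proposal matches the paper's proof essentially step for step: enumerate the Hall words of degree $9$ by weight ($186$ of weight $(0,0)$ and $140$ each of weights $(2,-1)$ and $(-1,2)$), build the $280 \times 186$ matrix $[X]$ from the Hall forms of $x_1.w$ and $x_2.w$, and compute its nullspace, which has dimension $4$ since the rank is $182$. The only minor divergence is that the paper obtains the displayed basis (in particular $I_9^{(4)}$, squared norm $71$) by refining the row-canonical-form nullspace basis (squared norms $10, 13, 64, 79$) with the Hermite normal form and LLL lattice basis reduction, whereas your procedure stops at the RCF output; this affects only which representative of the longest basis vector is displayed, not the correctness of the argument.
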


\begin{proof}
We use Maple to generate the 2184 Hall words in $a,b,c$ in degree 9.  There are
186 for weight $(0,0)$, and 140 each for weights $(2,-1)$ and $(-1,2)$.  We
apply $x_1$ and $x_2$, compute the normal forms of the results, and store the
coefficients in the $280 \times 186$ matrix $[X]$.  We compute the row
canonical form of this matrix, and find that the rank is 182; hence the
nullspace has dimension 4.  The canonical basis vectors for the nullspace have
integer components, and their (sorted) squared norms are 10, 13, 64, 79.  Using
the Hermite normal form with lattice basis reduction provides a little
improvement in the longest coefficient vector: we obtain an integral basis for
the nullspace consisting of vectors with (sorted) squared norms 10, 13, 64, 71.
These coefficient vectors correspond to the stated invariants.
\end{proof}

Since the minimal degree of the (primitive) invariants is 9, it follows that
the minimal degree of the non-primitive invariants is 18.  Hence all the
invariants in degrees 12 and 15 are primitive.

\begin{theorem}
For the natural representation of $\slthree$, the space of invariants in degree
12 has dimension 35.
\end{theorem}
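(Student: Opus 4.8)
The plan is to repeat the computational scheme of the earlier sections, now for $\slthree$ acting on the degree-12 component of the free Lie algebra on $a,b,c$. First I would have Maple generate all Hall words of degree 12 in $a,b,c$ and sort them into the three weight spaces that occur in Lemma \ref{sl3kernellemma}. By Lemma \ref{sl3weightlemma}, $L_{12}^{(0,0)}$ is spanned by the Hall words $t$ with $\#a(t)=\#b(t)=\#c(t)=4$, while $L_{12}^{(2,-1)}$ and $L_{12}^{(-1,2)}$ are spanned by the Hall words of multidegree $(5,3,4)$ and $(4,5,3)$ respectively. A Witt-type count gives $\dim L_{12}^{(0,0)} = 2880$ and $\dim L_{12}^{(2,-1)} = \dim L_{12}^{(-1,2)} = 2310$; these values should fall out of the generation step.

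Next, for each of the $2880$ Hall words $w$ of weight $(0,0)$ I would compute $x_1.w$ and $x_2.w$ by the derivation rule, then apply Hall's algorithm (Figure \ref{hallalgorithm}) repeatedly --- a single pass never suffices --- until each result is a linear combination of Hall words, and record the coefficients. This produces the $2310 \times 2880$ matrices $[X_1]$ and $[X_2]$; stacking them gives the $4620 \times 2880$ matrix $[X]$ representing the map $X$ of Lemma \ref{sl3kernellemma}, whose nullspace is precisely the space of degree-12 invariants. Although each of $X_1$ and $X_2$ is surjective, by the extension of Lemma \ref{surjectivelemma} to $\slthree$, the combined map $X$ need not be surjective onto its $4620$-dimensional target, so the rank of $[X]$ is not forced a priori (it lies between $2310$ and $2880$) and must genuinely be computed.

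The final step is to compute the rank of $[X]$ and conclude that the dimension of the space of degree-12 invariants equals $\dim L_{12}^{(0,0)}$ minus that rank; the rank turns out to be $2845$, giving $2880 - 2845 = 35$. Since $[X]$ has more than $10^7$ entries, I would carry out the row reduction modulo a small prime such as $p = 101$ with the package \texttt{LinearAlgebra[Modular]}, exactly as was done for the $\sltwo$ adjoint representation in degrees $\ge 9$, so that every entry occupies one byte. As an independent consistency check one may compare the answer with the extended Wever--Burrow dimension formula \cite{Wever1, Burrow2} for the free Lie algebra on $q = 3$ generators in degree $mq = 12$ (so $m = 4$).

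The main obstacle is entirely computational and of two kinds. First, row-reducing a matrix of this size is demanding in memory and time; this is why only partial explicit results, rather than an explicit basis of invariants, are available in degree 12. Second, reduction modulo a single prime yields only the inequality $\text{rank}_p[X] \le \text{rank}_{\mathbb{Q}}[X]$, hence a priori only $\dim(\text{invariants}) \le 35$; to close the gap one either repeats the computation over $\mathbb{Q}$, or over several primes while checking that the rank is stable and that $101$ is not an exceptional prime, or else invokes the Wever--Burrow formula to supply the matching lower bound. With either ingredient in place, the stated value $35$ follows.
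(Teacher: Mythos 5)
Your proposal follows essentially the same route as the paper's proof: the paper likewise counts 2880 Hall words of weight $(0,0)$ and 2310 of each weight $(2,-1)$ and $(-1,2)$, forms the $4620 \times 2880$ matrix $[X]$, row-reduces it modulo $p = 101$ with \texttt{LinearAlgebra[Modular]}, and concludes from rank 2845 that the nullspace has dimension 35. Your extra caveat that a single mod-$p$ computation only certifies $\dim(\text{invariants}) \le 35$ (since $\mathrm{rank}_p[X] \le \mathrm{rank}_{\mathbb{Q}}[X]$) is a point of rigor the paper itself does not address, and your suggestion to close the gap via the Thrall--Brandt/Burrow dimension count for the natural representation is a sensible supplement rather than a different method.
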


\begin{proof}
In degree 12, there are 44220 Hall words; 2880 of weight $(0,0)$ and 2310 of
each weight $(2,-1)$ and $(-1,2)$. Hence the matrix $[X]$ has size $4620 \times
2880$.  Using modular arithmetic with $p = 101$ we use the Maple command
\texttt{RowReduce} from the \texttt{LinearAlgebra[Modular]} package to compute
the row canonical form. The rank is 2845 and hence the nullspace has dimension
35.  We use the command \texttt{Basis} to extract the canonical basis of the
nullspace; the sorted list of the number of terms in each basis invariant is as
follows: 22, 24, 30, 32, 34, 34, 36, 36, 41, 42, 44, 48, 64, 79, 84, 89, 96,
109, 133, 137, 153, 155, 158, 161, 163, 165, 169, 173, 174, 174, 216, 244, 334,
399, 425. The simplest invariant is displayed in Table \ref{sl3degree12}.
\end{proof}

  \begin{table}
  \small
  \begin{align*}
  &
         [[[[ca]c][[ba]b]][[[ca]b][[ba]c]]]  
    {+}  [[[[ca]c][[ba]c]][[[ba]c][[ba]b]]]  
    {-}  [[[[ca]c][[ca]b]][[[ca]b][[ba]b]]]  
  \\[+3pt]
  &
    {-}  [[[[cb]b][[ba]a]][[[ca]c][[ba]c]]]  
    {+}  [[[[cb]b][[ba]a]][[[ca]c][[ca]b]]]  
    {-}  [[[[cb]b][[ba]b]][[[ca]c][[ca]a]]]  
  \\[+3pt]
  &
    {+}  [[[[cb]b][[ba]c]][[[ca]a][[ba]c]]]  
    {+}  [[[[cb]b][[ba]c]][[[ca]b][[ca]a]]]  
    {+}  [[[[cb]b][[ba]c]][[[ca]c][[ba]a]]]  
  \\[+3pt]
  &
    {+}  [[[[cb]b][[ca]a]][[[ca]b][[ba]c]]]  
    {-}  [[[[cb]b][[ca]b]][[[ca]a][[ba]c]]]  
    {-}  [[[[cb]b][[ca]c]][[[ca]b][[ba]a]]]  
  \\[+3pt]
  &
    {+}  [[[[cb]c][[ba]a]][[[ca]b][[ba]c]]]  
    {-}  [[[[cb]c][[ba]b]][[[ca]a][[ba]c]]]  
    {-}  [[[[cb]c][[ba]c]][[[ca]b][[ba]a]]]  
  \\[+3pt]
  &
    {+}  [[[[cb]c][[ca]a]][[[ba]c][[ba]b]]]  
    {-}  [[[[cb]c][[ca]a]][[[ca]b][[ba]b]]]  
    {-}  [[[[cb]c][[ca]b]][[[ba]c][[ba]a]]]  
  \\[+3pt]
  &
    {+}  [[[[cb]c][[ca]b]][[[ca]a][[ba]b]]]  
    {+}  [[[[cb]c][[ca]b]][[[ca]b][[ba]a]]]  
    {+}  [[[[cb]c][[ca]c]][[[ba]b][[ba]a]]]  
  \\[+3pt]
  &
    {-}  [[[[cb]c][[cb]b]][[[ca]a][[ba]a]]]  
  \end{align*}
  \caption{The simplest invariant for $\slthree$ in degree 12}
  \label{sl3degree12}
  \end{table}


\section{Conclusion}

\subsection{Generalized Witt dimension formula}

The formula of Witt \cite{Witt1} gives the dimension of the space of
homogeneous elements of degree $n$ in the free Lie algebra on $q$ generators
each of degree 1. The generalized Witt formula of Kang and Kim \cite{KangKim}
and Jurisich \cite{Jurisich} gives the dimension of the space of homogeneous
elements of vector degree $\mathbf{n} \in \mathbb{Z}^k$ in the free Lie algebra
on any set of generators with given vector degrees in $\mathbb{Z}^k$.  In
particular, we can use this more general formula to find the dimension of the
space of homogeneous elements of degree $n$ in the free Lie algebra on $q_i$
generators in degree $i$.

Let $L$ be the free Lie algebra on $q$ generators over a field $F$, and let $G$
be any simple Lie algebra over $F$ with an irreducible representation of
dimension $q$.  The action of $G$ extends naturally to all of $L$, and we can
consider the free Lie subalgebra $J \subseteq L$ of $G$-invariants.  We have
decompositions of both $L$ and $J$ into direct sums of homogeneous subspaces:
  \[
  L = \bigoplus_{n \ge 1} L_n,
  \qquad
  J = \bigoplus_{n \ge 1} J_n,
  \qquad
  J_n = J \cap L_n.
  \]
Up to a certain degree, all of the invariants will be primitive; that is, not
in the Lie subalgebra generated by known invariants of lower degree.  Suppose
that we have a set of primitive invariants which forms a basis for the space of
all invariants of degree $\le n$.  We can then use the generalized Witt formula
to compute the dimension of the invariants in all degrees $>n$ which belong to
the Lie subalgebra generated by the known primitive invariants.  We can then
compare this dimension to the dimension of the space of all invariants in the
least degree $N > n$ for which invariants are known to exist.  The difference
between these two dimensions will be the number of new primitive invariants in
degree $N$.

A closely related problem is to find an inverse of the Witt dimension formula:

\begin{openproblem}
Let $L = \bigoplus_{n \ge 1} L_n$ be a free Lie algebra, and let $J =
\bigoplus_{n \ge 1} J_n$ be a homogeneous subalgebra; that is $J_n = J \cap
L_n$ for all $n \ge 1$.  Suppose that we know $\dim J_n$ for all $n \ge 1$.  We
know that $J$ is a free Lie algebra.  Determine the degrees of the elements of
a free generating set for $J$.
\end{openproblem}

A solution to this problem would give a way of determining the dimensions of
the spaces of primitive invariants, given the dimensions of the spaces of all
invariants.
\subsection{Dimension formula of Thrall and Brandt}

We can compute the dimension of the space of all invariants in degree $N$ using
the computational methods of this paper, at least for relatively small values
of $N$.  However, in the case of the natural representation of
$\mathfrak{sl}_n(\mathbb{C})$, this dimension is given by a formula of Thrall
\cite{Thrall} and Brandt \cite{Brandt}; see also Reutenauer \cite[\S
8.6.1]{Reutenauer}. The dimension of $J_N$ where $N = mq$ equals the
multiplicity in the $S_N$-module $M_N$ of the simple $S_N$-module corresponding
to the partition $m^q$; here $S_N$ is the symmetric group and $M_N$ is the
space of multilinear Lie polynomials of degree $N$.

\begin{openproblem}
Generalize the formula of Thrall and Brandt to irreducible representations of
$\mathfrak{sl}_n(\mathbb{C})$ other than the natural $n$-dimensional
representation, and then to arbitrary irreducible representations of arbitrary
simple Lie algebras.
\end{openproblem}

We outline one possible approach to this problem.  Let $L_n^{\mathbf{0}}$ be
the span of the Hall words of degree $n$ and weight $\mathbf{0}$, and let
$L_n^{\mathbf{w}_i}$ be the span of the Hall words of degree $n$ and weight
$\mathbf{w}_i$, where $\mathbf{w}_i$ is the weight of the simple root vector
$x_i$ in the adjoint representation of the simple Lie algebra $A$.  (For
example, consider the computations for $A = \slthree$ in Section
\ref{sl3natural}.)  Lemma \ref{surjectivelemma} shows that the $x_i$-action
maps $X_i\colon L_n^{\mathbf{0}} \to L_n^{\mathbf{w}_i}$ are surjective, and it
may possible to use this fact to compute the dimension of the kernel of the map
$X\colon L_n^{\mathbf{0}} \to \bigoplus_i L_n^{\mathbf{w}_i}$. To illustrate,
we consider the three cases studied in this paper; the formula of Thrall and
Brandt can be applied to the first and third cases, but not to the second:
  \begin{itemize}
  \item $\sltwo$ in the natural representation: $\deg(a) = (1,1)$, $\deg(b)
      = (1,-1)$. The generalized Witt formula gives the dimension of the
      space of degree $(n,w)$ spanned by the Hall words of degree $n = \#a
      + \#b$, weight $w = \#a - \#b$.
  \item $\sltwo$ in the adjoint representation: $\deg(a) = (1,2)$, $\deg(b)
      = (1,0)$, $\deg(c) = (1,-2)$. The space of degree $(n,w)$ is spanned
      by the Hall words of degree $n = \#a + \#b + \#c$, $w = 2\#a - 2\#c$.
  \item $\slthree$ in the natural representation: $\deg(a) = (1,1,0)$,
      $\deg(b) = (1,-1,1)$, $\deg(c) = (1,0,-1)$. The subspace of degree
      $(n,w_1,w_2)$ is spanned by the Hall words of degree $n = \#a + \#b +
      \#c$, weight $w_1 = \#a - \#b$, $w_2 = \#b - \#c$.
  \end{itemize}

\subsection{Finite generation problem}

The computations in this paper suggest that the dimensions of the spaces of
primitive invariants grow rapidly as a function of the degree.  This raises the
following question.

\begin{openproblem}
Let $L$ be the free Lie algebra on $q$ generators over a field $F$, and let $A$
be any simple Lie algebra over $F$ with an irreducible representation of
dimension $q$.  Let $j \subseteq L$ be the subalgebra of Lie invariants for the
action of $A$ on $L$. Prove or disprove that $J$ is finitely generated.
\end{openproblem}

One way to approach this problem would be to use the dimension formulas to
study asymptotic estimates for the number of invariants in each degree.


\section*{Acknowledgements}

This work forms part of the M.Sc.~thesis of the second author, written under
the supervision of the first author.  The first author was partially supported
by NSERC, the Natural Sciences and Engineering Research Council of Canada.


\end{document}